\newcommand{\eql}{\kern-1ex &=& \kern-1ex}
\newcommand{\N}{\mathbb{N}}
\newcommand{\R}{\mathbb{R}}
\newcommand{\Ker}{\mathop\mathrm{Ker}\nolimits}
\newcommand{\coKer}{\mathop\mathrm{coKer}\nolimits}
\newcommand{\ind}{\mathop\mathrm{ind}\nolimits}
\newcommand\multimapsto{
\hbox{\begin{picture}(16,0)
\kern 0.6ex\put(0.5,0.6){\line(0,1){3.6}}$\multimap$
\end{picture}}}
\newcommand{\pb}{\kern -2ex}
\renewcommand{\Im}{\mathop\mathrm{Img}\nolimits}
\renewcommand{\d}{\delta}
\renewcommand{\l}{\lambda}
\newcommand{\e}{\varepsilon}
\newcommand{\s}{\sigma}
\renewcommand{\a}{\alpha}
\newcommand{\g}{\gamma}
\renewcommand{\t}{\theta}
\theoremstyle{plain}
\newtheorem{theorem}{Theorem}[section]
\newtheorem{corollary}[theorem]{Corollary}
\newtheorem{lemma}[theorem]{Lemma}
\newtheorem{proposition}[theorem]{Proposition}
\newtheorem{conjecture}[theorem]{Conjecture}
\newtheorem{assertion}[theorem]{Assertion}
\newtheorem{remark}[theorem]{Remark}
\theoremstyle{definition}
\newtheorem{definition}[theorem]{Definition}
\newtheorem{example}[theorem]{Example}
\numberwithin{equation}{section}
\begin{document}

\title[Global persistence of eigenvectors]{Global persistence of the unit eigenvectors of perturbed eigenvalue problems in Hilbert spaces}
\author[P.\ Benevieri]{Pierluigi Benevieri}
\author[A.\ Calamai]{Alessandro Calamai}
\author[M.\ Furi]{Massimo Furi}
\author[M.P.\ Pera]{Maria Patrizia Pera}

\thanks
{The first, second and fourth authors are members of the Gruppo Nazionale per l'Analisi Matematica, la Probabilit\`a e le loro Applicazioni (GNAMPA) of the Istituto Nazionale di Alta Matematica (INdAM)}
\thanks{A.\ Calamai is partially supported by GNAMPA - INdAM (Italy)}


\address{Pierluigi Benevieri -
Instituto de Matem\'atica e Estat\'istica,
Universidade de S\~ao Paulo,
Rua do Mat\~ao 1010,
S\~ao Paulo - SP - Brasil - CEP 05508-090 -
 {\it E-mail address: \tt
pluigi@ime.usp.br}}
\address{Alessandro Calamai -
Dipartimento di Ingegneria Civile, Edile e Architettura,
Universit\`a Politecnica delle Marche,
Via Brecce Bianche,
I-60131 Ancona, Italy -
 {\it E-mail address: \tt
calamai@dipmat.univpm.it}}
\address{Massimo Furi - Dipartimento di Matematica e Informatica ``Ulisse Dini'', 
Universit\`a degli Studi di Firenze,
Via S.\ Marta 3, I-50139 Florence, Italy -
 {\it E-mail address: \tt
massimo.furi@unifi.it}}
\address{Maria Patrizia Pera - Dipartimento di Matematica e Informatica ``Ulisse Dini'',
Universit\`a degli Studi di Firenze,
Via S.\ Marta 3, I-50139 Florence, Italy -
 {\it E-mail address: \tt
mpatrizia.pera@unifi.it}}

\begin{abstract}
We consider the nonlinear eigenvalue problem $Lx + \e N(x) = \l Cx$, $\|x\|=1$, where $\e,\l$ are real parameters, $L, C\colon G \to H$ are bounded linear operators between separable real Hilbert spaces, and $N\colon S \to H$ is a continuous map defined on the unit sphere of $G$.
We prove a global persistence result regarding the set $\Sigma$ of the
\emph{solutions} $(x,\e,\l) \in S \times \R\times \R$ of this problem.
Namely, if the operators $N$ and $C$ are compact, under suitable assumptions on a solution $p_*=(x_*,0,\l_*)$ of the unperturbed problem, we prove that the connected component of $\Sigma$ containing $p_*$ is either unbounded or meets a triple $p\sp*=(x\sp*,0,\l\sp*)$ with $p\sp* \not= p_*$.
When $C$ is the identity and $G=H$ is finite dimensional, the assumptions on $(x_*,0,\l_*)$ mean that $x_*$ is an eigenvector of $L$ whose corresponding eigenvalue $\l_*$ is simple.
Therefore, we extend a previous result obtained by the authors in the finite dimensional setting.

Our work is inspired by a paper of R.\ Chiappinelli concerning the
local persistence property of the unit eigenvectors of perturbed self-adjoint operators in a real Hilbert space.
\end{abstract}

\keywords{Fredholm operators, nonlinear spectral theory, eigenvalues, eigenvectors}

\subjclass[2010]{47J10, 47A75}

\maketitle


\centerline{\textit{Dedicated to the memory of our dear friend and exceptional mathematician Alfonso Vignoli}}

\section{Introduction}
\label{Introduction}

In this paper we study a \textit{nonlinear eigenvalue problem} of the type
\begin{equation}
\label{problem-intro}
\begin{cases}
\;Lx + \e N(x) = \l Cx,\\[.3ex]
\;x \in S,
\end{cases}
\end{equation}
where $\e, \l \in \R$, $L,C\colon G \to H$ are bounded linear operators between real Hilbert spaces, and $N\colon S \to H$ is a continuous map defined on the unit sphere of $G$.

Problem \eqref{problem-intro} can be thought as a nonlinear perturbation of the eigenvalue problem $Lx = \l Cx$, where, as usual, by abuse of terminology, $\l_* \in \R$ is said to be an \emph{eigenvalue of (the equation) $Lx = \l Cx$} if the operator $L-\l_* C$ is not injective.

By a \emph{solution} of \eqref{problem-intro} we mean a \emph{triple} $(x,\e,\l)$ which satisfies the system, and we call $(\e,\l)$ the \emph{eigenpair} corresponding to the \emph{(unit) eigenvector} $x$.
The solutions and the eigenpairs with $\e=0$ are said to be \emph{trivial}.
The set of all the solutions is denoted by $\Sigma$, while $\Sigma_0$ stands for its subset of the trivial ones.
Obviously, $\Sigma$ and $\Sigma_0$ are closed in $S \times \R \times \R$.

\medskip
Recently, in \cite{BeCaFuPe-s3}, under the assumptions that $G = H = \R\sp{n}$ and that $C$ is the identity $I$, we obtained a sort of ``global persistence'' of the solution triples of the above problem (see Corollary \ref{continuation: finite dimension} below). That is, we proved that if $\l_* \in \R$ is a simple eigenvalue of $L$ (in the usual sense) and $x_*$ is any one of the two corresponding unit eigenvectors, then one gets the following
\begin{assertion}
\label{assertion}
The set $\Sigma\setminus\Sigma_0$ of the nontrivial solutions of \eqref{problem-intro} has a connected subset whose closure contains $p_*=(x_*,0,\l_*)$ and is either unbounded or meets a trivial solution $p\sp*=(x\sp*,0,\l\sp*)$ different from $p_*$.
\end{assertion}

Observe that this assertion does not imply that $\l\sp*$ is different from $\l_*$. However, if $\l\sp* = \l_*$, one necessarily has $p\sp*=(-x_*,0,\l_*)$.

\smallskip
Taking into account that the closure of a connected set is connected, from Assertion \ref{assertion} one gets that \emph{the component of $\Sigma$ containing $p_*$ is either unbounded or meets a trivial solution $p\sp* \not= p_*$.}

We point out that, given any trivial solution $p_*=(x_*,0,\l_*)$ of \eqref{problem-intro}, this last statement is meaningful when (and only when) the kernel of $L-\l_*C$ is one dimensional, due to the fact that (only) in this case the sphere of the unit eigenvectors corresponding to $\l_*$ is disconnected.

\medskip
In this paper we extend the global persistence result obtained in \cite{BeCaFuPe-s3} to the infinite dimensional setting (see Theorem \ref{continuation} below). Namely, given a trivial solution $p_*=(x_*,0,\l_*)$ of \eqref{problem-intro}, we get Assertion \ref{assertion} under the following assumptions:
\begin{itemize}
\item
$\Ker (L-\l_*C) = \R x_*$,
\item
$Cx_*\not= 0$,
\item
$\Im (L-\l_*C) \oplus C (\Ker (L-\l_*C)) = H$,
\item
$G$ and $H$ are separable,
\item
$N$ and $C$ are compact.
\end{itemize}

For the sake of simplicity, when a trivial solution $(x_*,0,\l_*)$ of \eqref{problem-intro} satisfies the first three of the above assumptions, we say that it is a \emph{simple solution}.

One can easily check that, given a linear operator $L\colon \R\sp{n} \to \R\sp{n}$, then $\l_* \in \R$ is a simple eigenvalue of $L$ with corresponding unit eigenvector $x_*$ if and only if the triple $(x_*,0,\l_*)$ is a simple solution of \eqref{problem-intro} in which $G=H=\R\sp{n}$ and $C=I$.

\medskip
We do not know whether or not the above assumptions imply that the eigenvalues $ \l_*$ and $ \l\sp*$ in Assertion \ref{assertion} are different. 
Nevertheless, in \cite{BeCaFuPe-s2}, we tackled the problem of the global persistence of the eigenvalues (more precisely, of the eigenpairs) of \eqref{problem-intro} and, in particular, by means of the Leray--Schauder degree theory we obtained the following

\begin{theorem}[Global continuation of eigenpairs]
\label{continuation of eigenpairs intro}
Regarding problem \eqref{problem-intro}, assume that the operator $L$ is Fredholm of index zero, that $C$ and $N$ are compact, and that, for some $\l_* \in \R$, the kernel of $L -\l_*C$ is odd dimensional and the condition
\begin{equation*}
\Im (L-\l_*C) + C (\Ker (L-\l_*C)) = H
\end{equation*}
is satisfied.

Then, in the set of all the eigenpairs $(\e,\l)$ of \eqref{problem-intro}, the connected component containing $(0,\l_*)$ is either unbounded or meets a trivial eigenpair $(0,\l\sp*)$ with \hbox{$\l\sp* \not= \l_*$}.
\end{theorem}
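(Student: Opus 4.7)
The plan is to combine a local Leray--Schauder index computation at $(0,\l_*)$ with a Whyburn/Rabinowitz-style global continuation on the eigenpair plane $\R^2$, i.e.\ on the projection of $\Sigma$ onto $\R\times\R$. Since $L$ is Fredholm of index $0$ and $C$ is compact, $L-\l C$ is Fredholm of index $0$ for every $\l$; the hypothesis $\Im(L-\l_*C)+C(\Ker(L-\l_*C))=H$, together with the equality $\dim\Ker(L-\l_*C)=\codim\Im(L-\l_*C)$ coming from index zero, upgrades the sum to a direct sum. Write $k:=\dim\Ker(L-\l_*C)$, which is odd by hypothesis.

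First I would perform a Lyapunov--Schmidt reduction at a point $(x_*,0,\l_*)$ of the trivial branch. Splitting $G=\Ker(L-\l_*C)\oplus G_1$ and $H=\Im(L-\l_*C)\oplus C(\Ker(L-\l_*C))$, with $P$ the projection onto the range, the equation $P\bigl(Lx+\e N(x)-\l Cx\bigr)=0$ can be solved for the $G_1$-component via the implicit function theorem in a neighborhood of $(x_*,0,\l_*)$, because $L-\l_*C\colon G_1\to\Im(L-\l_*C)$ is an isomorphism. Together with the sphere constraint $\|x\|^2=1$, this reduces the local analysis to a finite-dimensional bifurcation equation on the odd-dimensional kernel, depending on the two scalar parameters $(\e,\l)$.

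The key computation is that the Leray--Schauder index of a compact fixed-point reformulation of \eqref{problem-intro} jumps as $\l$ crosses $\l_*$ at $\e=0$. The transversality condition is precisely what makes $(I-P)C$ restrict to an isomorphism $\Ker(L-\l_*C)\to C(\Ker(L-\l_*C))$, so that the $\l$-derivative of the reduced map along the trivial branch acts, up to isomorphism, as $-\mathrm{Id}$ on the $k$-dimensional kernel. Since $k$ is odd, the orientation changes across $\l_*$, producing a nonzero jump of the index. Equivalently, the parity formula $(-1)^k$ of the Leray--Schauder index changes sign.

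Finally, with this local index jump in hand, a standard Whyburn-type topological argument closes the dichotomy. If the connected component $\Gamma\subset\R^2$ of the eigenpair set containing $(0,\l_*)$ were bounded and disjoint from all other trivial eigenpairs $(0,\l^*)$ with $\l^*\neq\l_*$, one could enclose $\Gamma$ in a bounded open $U\subset\R^2$ whose boundary carries no eigenpairs; then homotopy invariance would force the total degree computed over $U$ to vanish, contradicting the nonzero local jump at $(0,\l_*)$. The main obstacle I expect is the rigorous verification of the index jump under only the minimal regularity assumed on $N$ (continuity plus compactness): the Lyapunov--Schmidt reduction must cleanly isolate the linear Fredholm and transversality data carried by $L-\l_*C$ and $C$ from the rough nonlinear perturbation $\e N$, and the transversality hypothesis is exactly what keeps the reduced linearization nondegenerate in the $\l$ direction.
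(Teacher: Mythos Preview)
This theorem is not proved in the present paper: it is quoted from the authors' earlier article \cite{BeCaFuPe-s2}, and the only information given here is that the proof there proceeds ``by means of the Leray--Schauder degree theory''. So there is no proof in this paper to compare against line by line.

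That said, your outline is exactly the kind of argument one expects behind such a citation, and the essential steps are sound. Your observation that the hypothesis $\Im(L-\l_*C)+C(\Ker(L-\l_*C))=H$ automatically upgrades to a direct sum (via $\dim\Ker=\codim\Im$) is correct and is the mechanism that makes $C$ restrict to an isomorphism from the kernel onto a complement of the image; this is precisely what forces the $\l$-linearization of the reduced problem to be nondegenerate, so that the Leray--Schauder index flips by $(-1)^k=-1$ across $\l_*$. The global step via a Whyburn/Rabinowitz alternative is the standard way to close the dichotomy.

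Two places deserve tightening. First, the sphere constraint: $N$ is only defined on $S$, so to speak of a Leray--Schauder degree you must either extend $N$ off the sphere (e.g.\ by $N(x/\|x\|)$) and work on an annulus in $G$, or set up the degree directly for the map $\Phi$ on $S\times\R\times\R$; your sketch mixes the constrained and unconstrained pictures. Second, the sentence ``the total degree computed over $U$'' with $U\subset\R^2$ is imprecise: no degree lives on the eigenpair plane itself. The actual contradiction is obtained by lifting the isolating neighbourhood $U$ of the component $\Gamma$ back to a bounded open set in $S\times\R\times\R$ (or in $G\times\R$), computing the Leray--Schauder degree of a compact-perturbation-of-identity reformulation on the slices $\e=\text{const}$, and using homotopy invariance along $\partial U$ to force the sum of local index jumps over $\Gamma\cap(\{0\}\times\R)$ to vanish---which it cannot, since by assumption that intersection is the single point $(0,\l_*)$ with jump $-2\neq 0$. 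Making this lift explicit is the only real work left in your plan.
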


Because of Theorem \ref{continuation of eigenpairs intro}, we are inclined to believe that our main result (Theorem \ref{continuation}) could be sharpened according to the following conjecture that until now we have not been able to prove or deny:

\begin{conjecture} 
\label{conjecture-intro}
Let $(x_*,0,\l_*)$ be a simple solution of problem \eqref{problem-intro}. Suppose that $G$ and $H$ are separable, and that $N$ and $C$ are compact.

Then, the set of the nontrivial solutions of \eqref{problem-intro} has a connected subset whose closure contains $(x_*,0,\l_*)$ and is either unbounded or meets a trivial solution $(x\sp*,0,\l\sp*)$ with $\l\sp* \not= \l_*$.

\end{conjecture}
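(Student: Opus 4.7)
The plan is to reduce Conjecture \ref{conjecture-intro} to Theorem \ref{continuation of eigenpairs intro} via the projection $\pi\colon\Sigma\to\R\times\R$ defined by $\pi(x,\e,\l)=(\e,\l)$. First I would observe that the simple--solution hypotheses at $p_*$ force the assumptions of Theorem \ref{continuation of eigenpairs intro} to hold at $\l_*$: the kernel $\R x_*$ is one dimensional (hence odd dimensional); the image of $L-\l_*C$ has codimension one in $H$, so $L-\l_*C$ is Fredholm of index zero, and since $C$ is compact, $L=(L-\l_*C)+\l_*C$ is itself Fredholm of index zero; the required transversality identity is just the third bullet in the definition of simple solution. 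Consequently, the connected component $\mathcal{E}$ of eigenpairs containing $(0,\l_*)$ is either unbounded or meets some $(0,\l\sp*)$ with $\l\sp*\neq\l_*$.

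Next, let $\mathcal{C}$ denote the connected component of $\Sigma$ containing $p_*$; by Theorem \ref{continuation} we know $\mathcal{C}$ is unbounded or meets some trivial $p\sp*\neq p_*$. If $\mathcal{C}$ is unbounded the first alternative of the conjecture is achieved. Assume therefore that $\mathcal{C}$ is bounded and, for contradiction, that every trivial solution in $\mathcal{C}$ has $\l$--coordinate equal to $\l_*$; since $\Ker(L-\l_*C)=\R x_*$, the only such trivial solutions are $p_*$ and $-p_*=(-x_*,0,\l_*)$, so that $\mathcal{C}\cap\Sigma_0=\{p_*,-p_*\}$ and $\pi(\mathcal{C})\subseteq\mathcal{E}$ is a connected compact set whose only trivial eigenpair is $(0,\l_*)$.

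What one would need to close the argument is a global lifting statement: the eigenpair component $\mathcal{E}$, or at least its intersection with a neighbourhood of $\pi(\mathcal{C})$, has to be covered by $\pi(\mathcal{C})$. If a local continuous selection of unit eigenvectors were available at each solution in $\mathcal{C}$ around which a simple--solution type transversality persists --- parametrising nearby eigenpairs by nearby unit eigenvectors --- one could patch such selections along paths in $\mathcal{E}$ to conclude $\pi(\mathcal{C})=\mathcal{E}$. Then the $(0,\l\sp*)$ produced by Theorem \ref{continuation of eigenpairs intro} would lie in $\pi(\mathcal{C})$, and the compactness of $N$ and $C$ would lift it to a trivial solution $(x\sp*,0,\l\sp*)\in\mathcal{C}$ with $\l\sp*\neq\l_*$, contradicting the standing assumption.

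The main obstacle is that $\pi$ is not a fibration in any reasonable sense: the fibres are spheres of unit eigenvectors whose dimension can jump along $\mathcal{E}$, so at an eigenpair with two--dimensional or higher eigenspace different branches of eigenvectors coalesce, while at a regular eigenpair the fibre $\{\pm x\}$ splits into two branches that may live in distinct components of $\Sigma$. Thus a connected subset of $\mathcal{E}$ need not admit a connected lift, and two distinct components of $\Sigma$ may cover the same eigenpair component while each projects onto a proper subset of it. Ruling out this scenario within the minimal hypotheses of the conjecture appears to demand a genuinely new ingredient --- for instance, a direct Leray--Schauder computation showing that the degree contributions carried by $p_*$ and $-p_*$ cannot cancel on any bounded component of $\Sigma$ meeting $\Sigma_0$ only at $\{\pm p_*\}$. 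It is presumably at this point that the authors' arguments stop short, which is why Conjecture \ref{conjecture-intro} is still open.
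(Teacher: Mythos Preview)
The statement you are attempting is not proved in the paper: it is explicitly labelled a \emph{conjecture}, and the authors write that they ``have not been able to prove or deny'' it. There is therefore no proof in the paper to compare against. Your write-up is, in the end, not a proof either---and you say so yourself in the final paragraph---so the honest summary is that both you and the authors leave the statement open.

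That said, your analysis of \emph{where} the argument breaks is accurate and worth recording. The reduction in your first paragraph is correct: the three simple-solution conditions do imply that $L-\l_*C$ is Fredholm of index zero (one-dimensional kernel, one-codimensional image from the splitting $H=\Im(L-\l_*C)\oplus \R Cx_*$), hence so is $L$; and the transversality hypothesis of Theorem~\ref{continuation of eigenpairs intro} is a weakening of the direct-sum condition. Your second paragraph is also sound: if the component $\mathcal{C}\subset\Sigma$ through $p_*$ is bounded, it is compact by Lemma~\ref{properness}, and if it meets $\Sigma_0$ only over $\l_*$ then $\mathcal{C}\cap\Sigma_0=\{p_*,(-x_*,0,\l_*)\}$ and $\pi(\mathcal{C})$ is a compact connected subset of the eigenpair component $\mathcal{E}$ whose sole trivial eigenpair is $(0,\l_*)$.

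The genuine gap is exactly the one you name: to force a contradiction one would need $\pi(\mathcal{C})$ to be \emph{open} in $\mathcal{E}$ (so that it is clopen, hence all of $\mathcal{E}$), and that requires a local lifting of eigenpairs to unit eigenvectors near every point of $\mathcal{C}$. Lemma~\ref{diffeomorphism} supplies such a lifting only at \emph{simple} trivial solutions; along $\mathcal{C}$ the relevant kernel dimension may jump, and at nontrivial points there is no analogue available. Your diagnosis that $\pi$ fails to be a fibration, and that a degree-theoretic argument showing the contributions at $p_*$ and $(-x_*,0,\l_*)$ cannot cancel would be one route forward, matches the spirit of the paper's remarks surrounding Theorem~\ref{continuation of eigenpairs intro}. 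In short: your outline is a reasonable heuristic for why the conjecture \emph{should} hold, but it is not a proof, and the paper does not claim one.
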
 

Our investigation is mainly inspired by a paper of R.~Chiappinelli \cite{Chi2003}, who obtained a ``local persistence'' result for problem \eqref{problem-intro}.
Namely, assuming that
\begin{itemize}
\item
$L$ is a \emph{self-adjoint} operator defined on $G$,
\item
$C = I$ is the identity of $G$,
\item
$N \colon S \to G$ is Lipschitz continuous,
\item
$\l_* \in \R$ is an isolated simple eigenvalue of $L$,
\item
$x_*$ is any one of the two unit eigenvectors corresponding to $\l_*$,
\end{itemize}
he proved that,
\emph{defined in a neighborhood $V$ of $0 \in \R$, there exists a $G$-valued Lipschitz curve $\e \mapsto x_\e$, as well as a real Lipschitz function $\e \mapsto \l_\e$, such that
\[
Lx_\e + \e N(x_\e) = \l_\e x_\e, \quad
\|x_\e\|=1,
\]
for any $\e \in V$. Moreover, when $\e = 0$ one has $x_0=x_*$ and $\l_0=\l_*$.}

\medskip
The hypotheses of our Theorem \ref{continuation} seem incompatible with the assumptions of Chiappinelli's local persistence result, since, what in \eqref{problem-intro} is the compact operator $C$, in Chiappinelli's case is the identity $I$, which is not compact when the space is infinite dimensional. 
Nevertheless, under some natural conditions on the operator $L$, our result applies. This is the case, for example, when $L$ is compact or, more generally, when it is of the type $\l_* I + C$, with $\l_* \in \R$ and $C$ compact.
To see this, put $\e= -\s/\mu$ and $\l=\l_*+1/\mu$, and observe that the equation $Lx+\e N(x) = \l x$ becomes $x + \s N(x) = \mu Cx$, which is as in \eqref{problem-intro} with the identity in place of $L$.

\medskip
Further results regarding the local persistence of eigenvalues, as well as unit eigenvectors, have been obtained in \cite{BeCaFuPe-s1, Chi2017, ChiFuPe1, ChiFuPe2, ChiFuPe3, ChiFuPe4} in the case in which the eigenvalue $\l_*$ is not necessarily simple.
For a general review on nonlinear eigenvalue problems and applications to differential equations, see e.g.\ 
\cite{Chi2018} and references therein.

The proof of Theorem \ref{continuation} does not need any advanced tool (such as Leray-Schauder degree theory) and mainly requires basic concepts in Differential Topology and Functional Analysis that can be found in textbooks such as \cite{Lang, Milnor, Taylor & Lay}.

A crucial result in our investigation is Lemma \ref{diffeomorphism}, which concerns the unperturbed eigenvalue problem: it states that \emph{if $(x_*,0,\l_*)$ is a simple solution of \eqref{problem-intro}, then the map $\Psi\colon S \times \R \to H$, given by $(x,\l) \mapsto Lx-\l Cx$, establishes a diffeomorphism from a neighborhood of $(x_*,\l_*)$ in $S \times \R$ onto a neighborhood of $0 \in H$}.

We close the paper with some illustrating examples showing situations in which Theorem \ref{continuation} applies. We also show that, in our main result, the hypothesis that the ``starting'' trivial solution is simple cannot be removed.

\section{Notation and preliminaries}
\label{Preliminaries}

In this section, in addition to introducing our notation, we will recall some topological and algebraic concepts that will be needed in the following (for general reference see e.g.~\cite{Lang, Milnor, Taylor & Lay}).

\smallskip
Throughout the paper, $G$, $H$ and $G\times\R\times\R$ indicate real Hilbert spaces.
In each one of these spaces, the inner product is denoted by $\langle\cdot,\cdot\rangle$, or by a similar symbol, such as $\langle\cdot,\cdot\rangle'$, only when a possible misunderstanding regarding the hosting space may occur.
For example, the inner product of two elements $p_1 = (x_1,\e_1,\l_1)$ and $p_2 = (x_2,\e_2,\l_2)$ of $G\times\R\times\R$ is defined in the most natural way as follows:
\[
\langle p_1, p_2\rangle = \langle x_1, x_2\rangle' + \e_1\e_2 + \l_1\l_2,
\]
where, here, $\langle x_1, x_2\rangle'$ is the inner product of $x_1, x_2 \in G$.

The norm in any Hilbert space will be tacitly assumed to be the standard one associated with the inner product; namely, $\|\cdot\| = \sqrt{\langle\cdot,\cdot\rangle}$.

By a manifold we shall always mean a smooth (i.e.\ $C\sp\infty$) boundaryless submanifold of a real Hilbert space.
Given a manifold $\mathcal M$ in a Hilbert space, say $G$, and given a point $p \in \mathcal M$, the tangent space $T_p(\mathcal M)$ of $\mathcal M$ at $p$ will be always identified with a vector subspace of $G$; so that any $v \in T_p(\mathcal M)$ is the derivative $\g'(0)$ of a smooth path $\g\colon (-1,1) \to G$ whose image is in $\mathcal M$ and such that $\g(0) = p$.

Obviously, if $\mathcal M$ and $\mathcal N$ are two manifolds such that $\mathcal M \subseteq \mathcal N$ and $p \in \mathcal M$, then $T_p(\mathcal M)$ is a subspace of $T_p(\mathcal N)$.

By a smooth (i.e.\ $C\sp\infty$) map $f\colon \mathcal M \to \mathcal N$ between two manifolds $\mathcal M \subseteq G$ and $\mathcal N \subseteq H$ we mean the restriction (to $\mathcal M$ as domain and to $\mathcal N$ as codomain) of a smooth map $\hat f\colon U \to H$ defined on an open neighborhood $U$ of $\mathcal M$ in $G$.

Throughout this section, $\mathcal M$ and $\mathcal N$ denote two manifolds embedded in $G$ and $H$ respectively, and $f$ is a smooth map from $\mathcal M$ into $\mathcal N$.

Given $p \in \mathcal M$, the differential $df_p\colon T_p(\mathcal M) \to T_{f(p)}(\mathcal N)$ is the restriction of the Fr\'echet differential $d\hat f_p \colon G \to H$ of any smooth extension $\hat f\colon U \to H$ of $f$ to an open neighborhood $U$ of $\mathcal M$. To check that $df_p$ is well defined, think about the tangent vectors as derivatives of smooth curves.

If $f$ is bijective and its inverse $f\sp{-1}\colon \mathcal N \to \mathcal M$ is smooth, then $f$ is said to be a \emph{diffeomorphism (of $\mathcal M$ onto $\mathcal N$)}.
One also says that $f$ \emph{maps $\mathcal M$ diffeomorphically onto $\mathcal N$}.
It is known that $f$ is a diffeomorphism if and only if it is a homeomorphism with $df_p$ invertible for any $p \in \mathcal M$.
Notice that the function $t \mapsto t\sp3$ from $\R$ onto itself is smooth, is a homeomorphism, but not a diffeomorphism.

Here we will distinguish between differential and derivative.
The first one is always a linear map; the second one, when it makes sense, is a ``representative'' of the differential.
For example, if $f\colon \R\sp{k} \to \R\sp{s}$ and $p \in \R\sp{k}$, the derivative of $f$ at $p$, $f'(p)$, is the matrix representing the differential $df_p$ (the Jacobian matrix, in this case); if $f$ is a curve into $G$ and $p$ is in the domain of $f$, then $f'(p)$ is a vector of $G$, namely $f'(p)=df_p(1)$.

\smallskip
The following well-known result regarding $1$-dimensional manifolds will be crucial in the next section.

\begin{theorem}[On the classification of $1$-dimensional manifolds]
\label{one-dimensional manifolds}
Any connected (boundaryless) $1$-dimensional manifold is either diffeomorphic to an open real interval or to a circle.
\end{theorem}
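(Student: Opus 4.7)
The plan is to follow the classical argument by unit-speed parametrizations, as in the appendix of Milnor's \emph{Topology from the Differentiable Viewpoint}. Since $M$ is a (boundaryless) smooth $1$-submanifold of a Hilbert space, it inherits a Riemannian metric from the ambient inner product. Around each $p\in M$, a local chart reparametrized by arc length yields a smooth unit-speed curve $\gamma_p\cln (-\d_p,\d_p)\to M$ with $\gamma_p(0)=p$ and $\|\gamma_p'(t)\|=1$; by the inverse function theorem this is a diffeomorphism onto an open neighborhood of $p$ in $M$.

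The key preliminary is the following rigidity of transition maps: if $\gamma\cln I\to M$ and $\tilde\gamma\cln J\to M$ are two unit-speed parametrizations with overlapping images, then on each connected component of $\gamma^{-1}(\tilde\gamma(J))$ the change of parameter $\phi$ (defined by $\tilde\gamma\circ\phi=\gamma$) satisfies $\phi'(t)=\pm 1$, hence $\phi(t)=\pm t+c$. This is immediate from the chain rule together with $\|\gamma'\|=\|\tilde\gamma'\|=1$.

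Fixing a basepoint $p_0\in M$, I would then apply Zorn's lemma to the family of unit-speed parametrizations $\gamma\cln I\to M$ with $0\in I$ and $\gamma(0)=p_0$, ordered by restriction/extension; two members can be glued along their overlap by means of the rigidity above. A maximal element $\gamma_*\cln I_*\to M$, with $I_*=(a,b)$, has open image (via the local parametrizations $\gamma_p$) and, by connectedness of $M$, once one proves that its image is also closed, $\gamma_*(I_*)=M$. From here two cases exhaust all possibilities: if $\gamma_*$ is injective, it is a bijective local diffeomorphism, hence a diffeomorphism $I_*\to M$, realizing $M$ as an open interval; if $\gamma_*(s_1)=\gamma_*(s_2)$ for some $s_1<s_2$, the rigidity of transition maps forces $\gamma_*$ to be periodic of minimal period $L>0$, $I_*=\R$, and $\gamma_*$ descends to a diffeomorphism $\R/L\Z\to M$ onto a circle.

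The main obstacle is the endpoint analysis needed to force the above dichotomy: one must show that $b<\infty$ is impossible in the injective case (and symmetrically for $a$). Here the argument is that if $b<\infty$ one picks $t_n\to b^-$; the local unit-speed chart $\gamma_q$ at any accumulation point $q$ of $\{\gamma_*(t_n)\}$ can be glued to $\gamma_*$ by the rigidity lemma, either strictly extending $I_*$ (contradicting maximality) or producing the periodic closure of case (b). The delicate point is to produce such an accumulation point $q\in M$; this uses the local manifold structure and the fact that the unit-speed arcs cannot ``escape to infinity'' without violating extension maximality, which is the only nontrivial technicality in the otherwise very concrete argument.
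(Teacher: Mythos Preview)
The paper does not prove this theorem: it is stated as a well-known preliminary (with Milnor's book among the references) and used as a black box. So there is no ``paper's own proof'' to compare against; your sketch is essentially the classical arc-length argument from Milnor's appendix and is the right approach.

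Two small points are worth tightening. First, the sentence ``one must show that $b<\infty$ is impossible in the injective case'' is not what you actually need: if $\gamma_*$ is injective and surjective, then $M\cong I_*=(a,b)$ is already an open interval whether or not $a,b$ are finite. What you really need is surjectivity of $\gamma_*$, i.e.\ that $\gamma_*(I_*)$ is closed in $M$. Second, the way you propose to obtain the ``accumulation point $q$'' is more fragile than necessary: rather than trying to extract a convergent subsequence of $\gamma_*(t_n)$ (which is exactly the compactness-type issue you flag as delicate), argue directly that $\gamma_*(I_*)$ is open and, if it were not all of the connected space $M$, its boundary in $M$ would contain some $q\notin\gamma_*(I_*)$. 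The unit-speed chart $\gamma_q$ at such a $q$ automatically overlaps $\gamma_*(I_*)$; the rigidity $\phi(t)=\pm t+c$ then either yields a genuine extension of $I_*$ (contradicting maximality) or forces $\gamma_*$ to re-enter the chart from both ends, producing the periodic closure. This reorganization removes the only step you yourself call ``the delicate point''.
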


The following result concerning diffeomorphisms, known as the ``Local Inverse Function Theorem'', will be useful in the next section.
\begin{theorem}[On the local diffeomorphism]
\label{local diffeomorphism}
Let $f\colon \mathcal M \to \mathcal N$ be a smooth map between two manifolds and let $p \in \mathcal M$.

Then $f$ maps diffeomorphically an open neighborhood of $p$ in $\mathcal M$ onto an open neighborhood of $f(p)$ in $\mathcal N$ if and only if the differential $df_p\colon T_p(\mathcal M) \to T_{f(p)}(\mathcal N)$ is invertible.
\end{theorem}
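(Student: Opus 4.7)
\medskip

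The plan is to reduce the statement to the classical Inverse Function Theorem in Hilbert (Banach) spaces via local parametrizations of $\mathcal M$ near $p$ and of $\mathcal N$ near $f(p)$.

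\emph{Necessity ($\Rightarrow$).} This direction is a straightforward chain-rule argument. Assume $f$ restricts to a diffeomorphism from an open neighborhood $U \subseteq \mathcal M$ of $p$ onto an open neighborhood $V \subseteq \mathcal N$ of $f(p)$, and let $g = (f|_U)^{-1}\colon V \to U$. Since $g \circ f$ coincides with the identity on $U$ and $f \circ g$ with the identity on $V$, differentiating at $p$ and at $f(p)$ respectively yields $dg_{f(p)} \circ df_p = \mathrm{id}_{T_p(\mathcal M)}$ and $df_p \circ dg_{f(p)} = \mathrm{id}_{T_{f(p)}(\mathcal N)}$. Hence $df_p$ is invertible with inverse $dg_{f(p)}$.

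\emph{Sufficiency ($\Leftarrow$).} Assume $df_p$ is invertible. The plan is to choose smooth local parametrizations through which $f$ becomes a map between open subsets of Hilbert spaces. First, using the fact that $\mathcal M \subseteq G$ is a smooth submanifold, I would represent $\mathcal M$ near $p$ as a graph over the closed subspace $T_p(\mathcal M) \subseteq G$: there exist an open neighborhood $U_0$ of $0$ in $T_p(\mathcal M)$ and a smooth parametrization $\varphi\colon U_0 \to \mathcal M$ with $\varphi(0)=p$ and $d\varphi_0$ the canonical inclusion $T_p(\mathcal M) \hookrightarrow T_p(\mathcal M) = T_p(\mathcal M)$ (obtained, e.g., by inverting the orthogonal projection of $\mathcal M$ onto the affine subspace $p+T_p(\mathcal M)$; existence of a continuous inverse follows from the definition of submanifold, and smoothness of $\varphi$ follows since this projection is a local diffeomorphism). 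An analogous parametrization $\psi\colon V_0 \to \mathcal N$ with $\psi(0)=f(p)$ and $d\psi_0 = \mathrm{id}_{T_{f(p)}(\mathcal N)}$ is chosen near $f(p)$.

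Next, I would consider the composition $\tilde f := \psi^{-1} \circ f \circ \varphi$, well defined on a possibly smaller open neighborhood of $0$ in $T_p(\mathcal M)$ with values in $T_{f(p)}(\mathcal N)$. By the chain rule, $d\tilde f_0 = d\psi_0^{-1} \circ df_p \circ d\varphi_0$, which under the above identifications is exactly $df_p$, hence an invertible bounded linear operator between the Hilbert spaces $T_p(\mathcal M)$ and $T_{f(p)}(\mathcal N)$. The classical Inverse Function Theorem in Banach spaces then yields an open neighborhood $\tilde U_0 \subseteq U_0$ of $0$ that $\tilde f$ maps diffeomorphically onto an open neighborhood $\tilde V_0 \subseteq V_0$ of $0$. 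Pulling this back through $\varphi$ and $\psi$, one obtains that $f$ maps the open neighborhood $\varphi(\tilde U_0)$ of $p$ in $\mathcal M$ diffeomorphically onto the open neighborhood $\psi(\tilde V_0)$ of $f(p)$ in $\mathcal N$, as required.

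The main technical point is the construction of the local parametrizations $\varphi$ and $\psi$ and the verification that their differentials at the origin can be identified with the respective identity maps; everything else is a transcription of the classical Banach-space theorem. In the Hilbert setting this construction is painless because the tangent space is a closed subspace and therefore admits an orthogonal complement, so the graph representation of the submanifold is immediately available.
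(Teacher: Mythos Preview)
Your argument is correct and is the standard textbook proof: reduce to the Banach-space Inverse Function Theorem via local charts, with the chain rule handling the necessity direction.

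Note, however, that the paper does \emph{not} supply its own proof of this statement. Theorem~\ref{local diffeomorphism} is listed in the Preliminaries section as a well-known result, with a blanket reference to standard texts such as \cite{Lang, Milnor, Taylor & Lay}. So there is no ``paper's proof'' to compare against; what you have written is precisely the kind of argument one finds in those references, and it is sound in the Hilbert-manifold setting the paper works in (closed tangent subspaces, orthogonal complements available, graph parametrizations immediate).
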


Let, as above, $f\colon \mathcal M \to \mathcal N$ be a smooth map between two manifolds. Recall that $p \in \mathcal M$ is said to be a \emph{regular point (of $f$)} if the differential $df_p$ is surjective; otherwise $p$ is called a \emph{critical point}.
An element $q \in \mathcal N$ is a \emph{critical value (of $f$)} if $f\sp{-1}(q)$ does not contain critical points; otherwise $q$ is said to be a \emph{regular value}.
Obviously, if $q$ is not in the image of $f$, then it is a regular value.
Notice that ``points'' are in the domain and ``values'' in the codomain.

\smallskip
An useful consequence of the Implicit Function Theorem is the following
\begin{theorem}[On regularly defined finite codimensional manifolds]
\label{regularly defined finite codimensional manifolds}
Let $f$ be a smooth map from $\mathcal M$ into $\mathcal N$.
Assume that $\mathcal N$ is finite dimensional and that $q \in \mathcal N$ is a regular value for~$f$.

Then $f\sp{-1}(q)$, if nonempty, is a manifold whose codimension in $\mathcal M$ is the same as the dimension of $\mathcal N$. Moreover, for any $p \in f\sp{-1}(q)$, one has $T_p(f\sp{-1}(q)) = \Ker df_p$.
\end{theorem}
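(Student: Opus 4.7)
The plan is to prove the statement locally around an arbitrary $p \in f^{-1}(q)$ and then invoke the Implicit Function Theorem in Hilbert spaces. Set $k = \dim \mathcal N$. Using a smooth chart for $\mathcal N$ near $q$ that sends $q$ to $0 \in \R^k$, and a chart for $\mathcal M$ near $p$ modeled on an open neighborhood of the origin in the closed subspace $T_p(\mathcal M) \subset G$, I reduce to studying a smooth map $F \colon V \to \R^k$, defined on an open neighborhood $V$ of the origin in $T_p(\mathcal M)$, with $F(0) = 0$ and $dF_0 \colon T_p(\mathcal M) \to \R^k$ surjective. Surjectivity is exactly the regular-value hypothesis transported through the charts.

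Next I exploit the finite-dimensionality of the codomain to split the domain. The kernel $K := \Ker dF_0$ is closed in $T_p(\mathcal M)$ by continuity of $dF_0$ and has codimension $k$; choosing any $k$-dimensional algebraic complement $E$ gives a topological direct sum $T_p(\mathcal M) = K \oplus E$ for which the restriction $dF_0|_E \colon E \to \R^k$ is a linear isomorphism. Writing the variable as $(u,v) \in K \times E$, the partial differential of $F$ in $v$ at the origin is invertible, so the Banach-space Implicit Function Theorem produces an open neighborhood $V_K \subseteq K$ of $0$ and a smooth map $g \colon V_K \to E$, with $g(0) = 0$, such that in a neighborhood of $0 \in K \times E$ the equation $F(u,v) = 0$ is equivalent to $v = g(u)$.

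Transporting this back through the charts, $f^{-1}(q)$ coincides, near $p$, with the graph of $g$, which is the diffeomorphic image of an open subset of $K$. This exhibits $f^{-1}(q)$ as a smooth submanifold of $\mathcal M$ of codimension $k = \dim \mathcal N$. For the tangent space, I differentiate the identity $F(u, g(u)) \equiv 0$ at $u = 0$: for every $u \in K$ one obtains $dF_0(u,0) + dF_0(0, dg_0(u)) = 0$. Since $dF_0$ annihilates $K$ and is an isomorphism on $E$, this forces $dg_0 = 0$, so the tangent space to the graph at the origin is $\{(u,0) : u \in K\}$, which under the chart corresponds to $K = \Ker df_p$, as required.

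The only step where one must be careful is the topological splitting $T_p(\mathcal M) = K \oplus E$: I need $E$ to be not only an algebraic but a topological complement of $K$. This is automatic here because $K$ is the kernel of a continuous surjection onto a finite-dimensional space, hence closed of finite codimension, and any algebraic complement of the correct (finite) dimension is automatically topological. Apart from this bookkeeping and the reduction to coordinates via charts, the argument is a direct application of the Banach-space Implicit Function Theorem, specialized to the case of a finite-dimensional target.
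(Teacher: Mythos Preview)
Your argument is correct and is precisely the standard derivation of this result from the Implicit Function Theorem; the paper itself does not supply a proof but merely records the statement as ``a useful consequence of the Implicit Function Theorem,'' so your write-up fills in exactly what the paper omits. The one minor remark is that, since the paper works in Hilbert spaces, the topological complement $E$ of $K$ can be taken to be the orthogonal complement $K^\perp$, which slightly streamlines the splitting step you flagged; your more general Banach-space justification is nonetheless valid.
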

For example, the unit sphere $S$ of $G$ is a $1$-codimensional submanifold of $G$. Moreover, given $x_* \in S$, one has $T_{x_*}(S) = (\R x_*)\sp\perp = \{\dot x \in G: \langle x_*, \dot x\rangle = 0$\}. 
To see this, define $f \colon G \to \R$ by $f(x) = \|x\|\sp2$, consider the regular value $q = 1 \in \R$, and apply Theorem \ref{regularly defined finite codimensional manifolds}.

\medskip
Let $X$ and $Y$ denote metric spaces, and let $g\colon X \to Y$ be a continuous map.
One says that $g$ is \emph{compact} if it sends bounded subsets of $X$ into relatively compact subsets of $Y$, and that it is \emph{locally compact} if, given any $x \in X$, the restriction of $g$ to a convenient neighborhood of $x$ is a compact map.

The map $g$ is said to be \emph{proper} if $g\sp{-1}(K)$ is compact for any compact subset $K$ of $Y$.
As one can easily check, if $g$ is proper, then is a \emph{closed map}, in the sense that $g(D)$ is closed in $Y$ whenever $D$ is closed in $X$.

Recall that a subset $D$ of $X$ is called \emph{locally compact} if any point of $D$ admits a neighborhood (in $D$) which is compact.
Clearly, any (relatively) closed or (relatively) open subset of a locally compact set is locally compact.
Observe that the union of two locally compact sets could not be locally compact (for example, add a point to the boundary of the open unit disk in $\mathbb C$).

\medskip
Let hereafter $E$ and $F$ denote real Banach spaces, and let $L(E,F)$ be the Banach space of all the bounded linear operators from $E$ into $F$.

\smallskip
Recall that $L \in L(E,F)$ is said to be a \emph{Fredholm operator} if both its kernel, $\Ker L$, and its co-kernel, $\coKer L = F/\Im L$, are finite dimensional. The difference of these dimensions is called the \emph{index} of $L$ and denoted by $\ind L$.

The following are some useful properties regarding Fredholm operators:

\begin{itemize}
\item
[(1)] \emph{the image of a Fredholm operator $L \in L(E,F)$ is a closed subspace of its codomain $F$;}

\item
[(2)] \emph{the index of the Fredholm operators is stable, in the sense that, in $L(E,F)$, the set of Fredholm operators of a given index is open;}

\item
[(3)] \emph{if $L \in L(E,F)$ is Fredholm and $C \in L(E,F)$ is compact, then $L+C$ is Fredholm with $\ind(L+C) = \ind L$;}

\item
[(4)] \emph{the composition of two Fredholm operators is still Fredholm, and its index is the sum of the indices of the composite operators;}

\item
[(5)] \emph{Fredholm operators are proper on bounded closed subsets of their domains.}
\end{itemize}

\medskip
A smooth map $f\colon \mathcal M \to \mathcal N$ is said to be \emph{Fredholm of index $n$} if so is the linear operator $df_p\colon T_p(\mathcal M) \to T_{f(p)}(\mathcal N)$ for any $p \in \mathcal M$.

\smallskip
A fundamental result regarding Fredholm maps is the following infinite dimensional version of the well-known Sard--Brown Lemma.

\begin{theorem}[On the density of regular values \cite{Smale}]
If $f\colon \mathcal M \to \mathcal N$ is a Fredholm map between two second countable manifolds, then the set of its regular values is dense in $\mathcal N$.
\end{theorem}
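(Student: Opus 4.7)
The plan is to reduce the statement to the classical finite-dimensional Sard--Brown theorem by exploiting the local normal form of Fredholm maps, and then to globalize via the Baire category theorem. Since $\mathcal M$ is second countable, it admits a countable atlas; so I would work in a single chart and show that there the image of the critical set under $f$ is meager in $\mathcal N$. A countable union over the atlas then yields a meager set of critical values in $\mathcal N$, whose complement is dense by Baire.

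First, I would construct a local normal form for $f$. Fix $p \in \mathcal M$ and identify $f$ locally with a smooth map between open subsets of Banach spaces $E$ and $F$. Set $L = df_p$. By property (1) of Fredholm operators, $\Im L$ is closed, and since $\Ker L$ and $\coKer L$ are finite dimensional, I obtain splittings $E = \Ker L \oplus E_1$ and $F = F_1 \oplus \Im L$ with $L|_{E_1}\colon E_1 \to \Im L$ a Banach space isomorphism. Writing $f = (f_1, f_2) \in F_1 \oplus \Im L$ with variables $(u,v) \in \Ker L \oplus E_1$, the auxiliary map $\Phi(u,v) = (u, f_2(u,v))$ has invertible differential at $p$. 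Theorem \ref{local diffeomorphism} then produces local coordinates $(u,w) := \Phi(u,v)$ in which $f$ assumes the normal form
\[
(u,w) \longmapsto (\varphi(u,w), w)
\]
for some smooth $\varphi$ with values in the finite-dimensional space $F_1$. A direct inspection of the Jacobian shows that $(u,w)$ is a critical point of $f$ if and only if $u$ is a critical point of the finite-dimensional slice map $\varphi(\cdot,w)\colon \Ker L \to F_1$.

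Second, I would apply finite-dimensional Sard slice-wise and then assemble. By the classical Sard--Brown theorem, for each fixed $w$ the critical values of $\varphi(\cdot,w)$ form a Lebesgue-null subset of $F_1$. Covering the critical set by countably many closed bounded pieces (available by the second countability of $E$), each such piece is mapped by $f$ to a closed subset of $F$: indeed, $f$ is locally proper on closed bounded sets, a consequence of property (5) together with the smoothness of $f$. A closed set in $F_1 \oplus \Im L$ which meets every slice $F_1 \times \{w\}$ in a set of measure zero in $F_1$ cannot contain an open rectangle, hence has empty interior and is nowhere dense. Therefore the critical values of $f$ in each chart form a meager subset of $F$. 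Summing over the countable atlas and invoking the Baire theorem yields density of the regular values in $\mathcal N$.

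The main obstacle I anticipate is this last \emph{Fubini-to-Baire} step: measure-theoretic Fubini is not available on the infinite-dimensional codomain $F$, so the slice-wise null statement in the finite-dimensional factor $F_1$ must be combined with a closedness argument (via properness of Fredholm maps on bounded sets) before Baire can be applied. The second-countability hypothesis is essential both to secure a countable atlas on $\mathcal M$ and to pass from local meagerness to global density via the Baire theorem.
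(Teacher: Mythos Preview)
The paper does not prove this theorem at all: it is stated with a citation to Smale \cite{Smale} and used as a black box later in the proof of Theorem~\ref{continuation}. There is therefore no ``paper's own proof'' to compare against.

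That said, your sketch is essentially Smale's original argument, and the overall architecture is correct: local normal form via the splittings induced by the Fredholm property, reduction of the critical-point condition to a finite-dimensional slice map $\varphi(\cdot,w)\colon \Ker L \to F_1$, classical Sard on each slice, and then a closedness-plus-Baire argument to pass from ``null in each slice'' to ``meager''. Your identification of the delicate step is accurate: there is no ambient measure on $F$, so one must first arrange the image of (pieces of) the critical set to be \emph{closed} before the slice-wise null information yields empty interior.

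One point deserves more care. Your justification of local properness --- ``a consequence of property~(5) together with the smoothness of $f$'' --- is too quick as stated: property~(5) in the paper concerns \emph{linear} Fredholm operators, and does not directly apply to the nonlinear $f$. What Smale actually uses is that a $C^1$ Fredholm map is \emph{locally proper} (equivalently, locally closed): near any point, $f$ differs from its differential by a map whose differential vanishes at that point, so on a small enough ball the nonlinear part is a compact perturbation, and one recovers properness from the linear case. Once you make this local-properness step explicit, the rest of your outline goes through and matches the proof in \cite{Smale}.
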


\smallskip
Regarding the notion of Fredholm map one has another useful consequence of the Implicit Function Theorem.

\begin{theorem}[On regularly defined finite dimensional manifolds]
\label{regularly defined finite dimensional manifolds}
Let $f\colon \mathcal M \to \mathcal N$ be a Fredholm map and let $q \in \mathcal N$ be a regular value for~$f$.

Then $f\sp{-1}(q)$, if nonempty, is a manifold whose dimension is the same as the index of $f$.
Moreover, for any $p \in f\sp{-1}(q)$ one has $T_p(f\sp{-1}(q)) = \Ker df_p$.
\end{theorem}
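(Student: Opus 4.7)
The plan is to prove the statement by a local Implicit Function Theorem argument, showing for each $p \in f\sp{-1}(q)$ that $f\sp{-1}(q)$ is a smooth manifold of dimension $\ind f$ near $p$ with tangent space $\Ker df_p$. First I would observe that regularity of $q$ forces $df_p\colon T_p(\mathcal M) \to T_q(\mathcal N)$ to be a surjective Fredholm operator, so $\coKer df_p = 0$ and $\dim \Ker df_p = \ind df_p = \ind f$. Setting $K = \Ker df_p$, the finite-dimensionality of $K$ guarantees the existence of a closed complement $W$ with $T_p(\mathcal M) = K \oplus W$; the restriction $df_p|_W\colon W \to T_q(\mathcal N)$ is a continuous linear bijection, hence a topological isomorphism by the Open Mapping Theorem.

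Next, I would read $f$ through coordinate charts. For submanifolds of a Hilbert space one may use, for instance, orthogonal projection onto the tangent space to produce smooth charts $\varphi$ of $\mathcal M$ centered at $p$ and $\psi$ of $\mathcal N$ centered at $q$, with $\varphi(0)=p$, $\psi(0)=q$, and $d\varphi_0$, $d\psi_0$ the identities of $T_p(\mathcal M)$ and $T_q(\mathcal N)$ respectively. The composition $F := \psi\sp{-1} \circ f \circ \varphi$ is then a smooth map from a neighborhood of $0$ in $K \oplus W$ into $T_q(\mathcal N)$ with $F(0,0) = 0$ and $\partial_w F(0,0) = df_p|_W$ invertible. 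The Banach-space Implicit Function Theorem thus yields a smooth map $g$ from a neighborhood $V$ of $0$ in $K$ into $W$, with $g(0) = 0$, such that on a convenient neighborhood of $(0,0)$ the equation $F(k,w)=0$ is equivalent to $w = g(k)$.

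Consequently $k \mapsto \varphi(k, g(k))$ is a smooth parametrization of a neighborhood of $p$ in $f\sp{-1}(q)$ by the open set $V \subset K$, which exhibits $f\sp{-1}(q)$ as a smooth submanifold of $\mathcal M$ of dimension $\dim K = \ind f$. For the tangent space assertion, differentiating the identity $F(k, g(k)) = 0$ at $k = 0$ gives $df_p|_K + df_p|_W \circ dg_0 = 0$; since $df_p|_K = 0$ and $df_p|_W$ is an isomorphism, one obtains $dg_0 = 0$. Hence the tangent space to $f\sp{-1}(q)$ at $p$ is the image of $k \mapsto (k, 0)$, namely $K = \Ker df_p$.

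The only genuine subtlety lies in setting up the Banach-space Implicit Function Theorem correctly, and this reduces to verifying (i) that $K$ admits a closed complement in $T_p(\mathcal M)$ and (ii) that the manifolds $\mathcal M$ and $\mathcal N$ carry charts of the prescribed kind. Both are automatic here: (i) because $K$ is finite dimensional, and (ii) because any submanifold of a Hilbert space is locally the graph of a smooth map over its tangent space via orthogonal projection. With these tools in hand, the argument is a mechanical, coordinate-level application of the classical implicit function machinery.
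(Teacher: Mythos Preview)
The paper does not actually supply a proof of this theorem: it is stated in the preliminaries as a well-known ``consequence of the Implicit Function Theorem'' and attributed to standard references (Lang, Milnor, Taylor--Lay). Your argument is correct and is precisely the standard one the authors have in mind: split $T_p(\mathcal M) = \Ker df_p \oplus W$ using finite-dimensionality of the kernel, note that $df_p|_W$ is an isomorphism onto $T_q(\mathcal N)$ by surjectivity and the Open Mapping Theorem, pass to local charts, and apply the Banach-space Implicit Function Theorem to obtain a local graph parametrization over $\Ker df_p$; the derivative computation $dg_0=0$ then identifies the tangent space with $\Ker df_p$. There is nothing to compare---your write-up simply fills in the details the paper leaves to the cited textbooks.
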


\section{Results}
\label{Results}

Let $G$ and $H$ be two real Hilbert spaces, and consider the problem
\begin{equation}
\label{problem}
\begin{cases}
\;Lx + \e N(x) = \l Cx,\\[.3ex]
\;x \in S,
\end{cases}
\end{equation}
where $\e, \l \in \R$, $L,C\colon G \to H$ are bounded linear operators, and $N\colon S \to H$ is a continuous map defined on the unit sphere of $G$.

A \emph{solution} of \eqref{problem} is a \emph{triple} $(x,\e,\l)$ which satisfies the system, and $(\e,\l)$ is called the \emph{eigenpair} corresponding to the \emph{(unit) eigenvector} $x$.
The solutions and the eigenpairs with $\e=0$ are said to be \emph{trivial}.

By $\Sigma$ we denote the set of the solutions of \eqref{problem} and by $\mathcal E$ we designate its projection into the $\e\l$-plane. Notice that $\mathcal E$ coincides with the set of the eigenpairs of \eqref{problem}.
By $\Sigma_0$ and $\mathcal E_0$ we mean, respectively, the sets of the trivial solutions and the trivial eigenpairs of \eqref{problem}.

\medskip
In order to simplify some statements, it is convenient to introduce the following

\begin{definition}
\label{simple}
A trivial solution $(x_*,0,\l_*)$ of \eqref{problem}, as well as the corresponding eigenpair $(0,\l_*)$, will be called \emph{simple} if the associated linear operator $A = L-\l_* C$ satisfies the following conditions:
\begin{itemize}
\item[(1)]
$\Ker A = \R x_*$,
\item[(2)]
$Cx_*\not= 0$,
\item[(3)]
$\Im A \oplus C (\Ker A) = H$.
\end{itemize}
\end{definition}

To proceed with the analysis of the structure of $\Sigma$, we need a result regarding the linear eigenvalue problem $Lx=\l Cx$. Namely

\begin{lemma}
\label{diffeomorphism}
Assume that $(x_*,0,\l_*)$ is a simple solution of problem \eqref{problem}.
Then, the map
\[
\Psi\colon S\times\R \to H, \quad
(x,\l) \mapsto Lx-\l Cx,
\]
sends, diffeomorphically, a neighborhood of $(x_*,\l_*)$ in $S\times\R$ onto a neighborhood of the origin $0 \in H$.
\end{lemma}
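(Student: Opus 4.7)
My plan is to prove the lemma via the Local Inverse Function Theorem (Theorem \ref{local diffeomorphism}): it suffices to verify that the differential
\[
d\Psi_{(x_*,\l_*)}\colon T_{x_*}(S)\times\R \;\longrightarrow\; H
\]
is a topological isomorphism. Using the identification $T_{x_*}(S) = (\R x_*)\sp\perp$ recalled after Theorem \ref{regularly defined finite codimensional manifolds}, a direct computation (extend $\Psi$ to $G\times\R$ and restrict) gives
\[
d\Psi_{(x_*,\l_*)}(\dot x,\dot\l) \;=\; A\dot x \;-\; \dot\l \, Cx_*,
\qquad (\dot x,\dot\l)\in (\R x_*)\sp\perp\times\R,
\]
where $A := L - \l_*C$. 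This map is bounded since $L$ and $C$ are. So the entire task reduces to showing that
\[
\Lambda\colon (\R x_*)\sp\perp\times\R \to H,\quad (\dot x,\dot\l)\mapsto A\dot x - \dot\l Cx_*,
\]
is bijective; the bounded inverse will then follow from the Open Mapping Theorem applied between Banach spaces.

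For injectivity, suppose $A\dot x = \dot\l\, Cx_*$. Since $\Ker A = \R x_*$ by (1), we have $C(\Ker A)=\R\,Cx_*$, and by (2), $Cx_*\neq 0$, so the direct sum condition (3) forces both $A\dot x=0$ and $\dot\l\, Cx_* = 0$. The second equation gives $\dot\l=0$, and the first places $\dot x$ in $\Ker A = \R x_*$; since also $\dot x\in(\R x_*)\sp\perp$, we conclude $\dot x = 0$. For surjectivity, let $h\in H$. By (3), there exist $y\in G$ and $\mu\in\R$ with $h = Ay + \mu\, Cx_*$. Decompose $y = \a x_* + \dot x$ with $\dot x\in(\R x_*)\sp\perp$; since $x_*\in\Ker A$, one gets $Ay = A\dot x$, so $h = A\dot x - (-\mu)Cx_* = \Lambda(\dot x,-\mu)$.

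Thus $\Lambda$ is a bounded bijection between Hilbert spaces, and so an isomorphism by the Open Mapping Theorem. Applying Theorem \ref{local diffeomorphism} yields the claim.

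The main obstacle is simply the clean bookkeeping needed to exploit the three conditions in Definition \ref{simple} simultaneously: condition (3) is what really drives both the injectivity and surjectivity arguments, while (1) and (2) are needed to ensure that $C(\Ker A)$ is exactly the one-dimensional complement $\R\,Cx_*$ of $\Im A$ appearing in (3). No deep analytic tool beyond the Open Mapping Theorem and the Local Inverse Function Theorem is required.
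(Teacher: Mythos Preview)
Your proof is correct and follows the same overall strategy as the paper: reduce to the Local Inverse Function Theorem and verify that the differential $d\Psi_{(x_*,\l_*)}(\dot x,\dot\l)=A\dot x-\dot\l\,Cx_*$ is an isomorphism. Your injectivity argument is essentially identical to the paper's. The only genuine difference is in the surjectivity step: you construct a preimage directly from the splitting in condition~(3) (write $h=Ay+\mu Cx_*$, then drop the $\R x_*$-component of $y$ using $Ax_*=0$), whereas the paper instead shows that $d\Psi_{(x_*,\l_*)}$ is Fredholm of index zero by writing it as a Fredholm operator of index zero plus a rank-one (hence compact) perturbation, and then infers surjectivity from injectivity. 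Your route is more elementary and self-contained; the paper's route has the minor advantage of exhibiting the Fredholm structure explicitly, which is thematically in line with the Fredholm arguments used in Lemmas~\ref{properness}--\ref{N smooth}. Either way, the Open Mapping Theorem (or equivalently the automatic continuity of the inverse built into Theorem~\ref{local diffeomorphism}) closes the argument.
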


\begin{proof}
Let $z_* = (x_*,\l_*)$ and observe that the tangent space of $S\times\R$ at $z_*$ is the $1$-codimensional subspace
\[
T_{z_*}(S\times\R) = T_{x_*}(S) \times \R = (\R x_*)\sp{\perp} \times \R\]
of $G\times\R$.

Because of the (Local) Inverse Function Theorem, it is enough to prove that the differential $d\Psi_{z_*}\colon T_{z_*}(S\times\R) \to H$ of $\Psi$ at $z_*$ is an isomorphism.

Let us show first that $d\Psi_{z_*}(\dot z) = 0$ implies $\dot z = 0$.
We have
\begin{equation}
\label{in the kernel}
d\Psi_{z_*}(\dot z) =
(L-\l_*C)\dot x - \dot\l Cx_* = 0,
\end{equation}
where $\dot z = (\dot x, \dot \l)$ with $\langle\dot x,x_*\rangle= 0$.

Since $\Ker(L-\l_*C) = \R x_*$, from the splitting condition $(3)$ in Definition \ref{simple} we derive
\begin{equation}
\label{decomposition}
H =
\Im(L-\l_*C) \oplus \R Cx_*.
\end{equation}
Taking into account that $Cx_* \not= 0$, \eqref{in the kernel} implies
$\dot x \in \R x_*$ and $\dot \l = 0$.
Thus, recalling that $\langle \dot x, x_* \rangle = 0$, we get $\dot x = 0$ and, consequently, $\dot z = (\dot x, \dot \l) = 0$.

It remains to show that $d\Psi_{z_*}$ is surjective. This task will be accomplished by showing that the operator $d\Psi_{z_*}$ is Fredholm of index zero.

To this purpose we regard the differential
$
d\Psi_{z_*}\colon (\R x_*)\sp{\perp} \times \R \to H
$
as the sum of two bounded linear operators, $A$ and $B$, defined by
\[
A\dot z = (L-\l_* C)\dot x \quad \text{and} \quad B\dot z = -\dot \l Cx_*,
\]
with $\dot z = (\dot x,\dot\l) \in (\R x_*)\sp{\perp} \times \R$.

Concerning the operator $A$, observe that its kernel is the $1$-dimensional subspace $\{0\}\times \R$ of $(\R x_*)\sp{\perp} \times \R$ and its image coincides with that of $L-\l_* C$, whose codimension is one, due to the splitting \eqref{decomposition} together with the condition $Cx_* \not=0$.
Thus $A$ is Fredholm of index zero and, consequently, so is $d\Psi_{z_*}$, since the operator $B$ is compact.
This completes the proof.
\end{proof}

\smallskip
Lemma \ref{diffeomorphism} allows us to provide an alternative proof of the result of Chiappinelli mentioned in the Introduction. In fact, we get the following corollary, whose proof is only sketched, being essentially the same as that in the finite dimensional setting regarding Corollary 3.3 in \cite{BeCaFuPe-s3}.

\begin{corollary} 
Let $(x_*,0,\l_*)$ be a simple solution of problem \eqref{problem} and assume that $N\colon S \to H$ is a Lipschitz continuous map.
Then there exists a neighborhood $(-\d,\d)$ of $0 \in \R$ and a Lipschitz curve
\[
\e \in (-\d,\d) \mapsto (x(\e),\l(\e)) \in S\times\R
\]
such that $(x(0),\l(0))=(x_*,\l_*)$ and
\[
Lx(\e) + \e N(x(\e)) = \l(\e)C x(\e), \quad \forall\, \e \in (-\d,\d).
\]
\end{corollary}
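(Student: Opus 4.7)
The plan is to reduce the perturbed eigenvalue problem to a fixed‑point equation on a neighborhood of $(x_*,\l_*)$ in $S\times\R$ and apply the Banach contraction principle with parameter $\e$, using Lemma \ref{diffeomorphism} as the key ingredient.

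First I would invoke Lemma \ref{diffeomorphism} to obtain an open neighborhood $U$ of $z_*=(x_*,\l_*)$ in $S\times\R$ and an open neighborhood $V$ of $0\in H$ such that $\Psi|_U\colon U\to V$ is a diffeomorphism. Writing $\Phi=(\Psi|_U)^{-1}\colon V\to U$, the map $\Phi$ is smooth, hence Lipschitz on some closed ball $\overline{B}(0,r)\subseteq V$ with some constant $K>0$. Note that the equation $Lx+\e N(x)=\l Cx$ with $(x,\l)\in U$ is equivalent to $\Psi(x,\l)=-\e N(x)$, i.e.\ to the fixed‑point equation
\[
(x,\l)=\Phi\bigl(-\e N(x)\bigr).
\]

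Next I would set up the contraction. Since $N$ is Lipschitz on $S$, there is $L_N>0$ such that $\|N(x_1)-N(x_2)\|\le L_N\|x_1-x_2\|$, and $N$ is bounded by some $M$ on a neighborhood of $x_*$. Choose a closed ball $W=\overline{B}(z_*,\rho)\cap(S\times\R)\subseteq U$. For $\e$ with $|\e|$ small enough that $|\e|M<r$ and $|\e|KL_N<1/2$, the map
\[
F_\e\colon W\to S\times\R,\qquad F_\e(x,\l):=\Phi\bigl(-\e N(x)\bigr),
\]
is well defined, sends $W$ into itself (because $\|F_\e(x,\l)-z_*\|=\|\Phi(-\e N(x))-\Phi(0)\|\le K|\e|M$, which can be made smaller than $\rho$), and is a contraction of constant $K|\e|L_N<1/2$ in the $(x,\l)$‑variable. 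The Banach fixed‑point theorem then yields a unique $(x(\e),\l(\e))\in W$ with $F_\e(x(\e),\l(\e))=(x(\e),\l(\e))$, and at $\e=0$ we recover $(x(0),\l(0))=(x_*,\l_*)$.

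Finally, Lipschitz dependence of $\e\mapsto(x(\e),\l(\e))$ follows from the standard parameter‑dependent contraction estimate: for $\e_1,\e_2$ in the chosen interval $(-\d,\d)$,
\[
\|F_{\e_1}(x(\e_1),\l(\e_1))-F_{\e_2}(x(\e_1),\l(\e_1))\|\le KM|\e_1-\e_2|,
\]
so combining with the contraction bound gives
\[
\|(x(\e_1),\l(\e_1))-(x(\e_2),\l(\e_2))\|\le \frac{KM}{1-KL_N|\e|}\,|\e_1-\e_2|,
\]
producing a uniform Lipschitz constant on $(-\d,\d)$. The main technical point to take care of is that $F_\e$ takes values in the manifold $S\times\R$, not just in $G\times\R$; this is precisely guaranteed by the fact that $\Phi$ has codomain $U\subseteq S\times\R$, which is exactly what Lemma \ref{diffeomorphism} provides. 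Once this is in place, no separate projection argument on $S$ is needed, which is the chief advantage of working through the diffeomorphism $\Psi$ rather than directly on the ambient space.
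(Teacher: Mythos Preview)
Your proof is correct and follows essentially the same route as the paper's own sketch: invoke Lemma~\ref{diffeomorphism}, rewrite the equation as a parameter-dependent fixed-point problem via the local diffeomorphism, and apply the Banach contraction principle together with the standard estimate for Lipschitz dependence on the parameter. The only cosmetic difference is that the paper carries out the contraction in the variable $q=\Psi(x,\l)\in H$ (so the fixed-point equation reads $q=\e f(q)$ with $f=-N\circ\pi_1\circ(\Psi|_U)^{-1}$), whereas you work directly on $(x,\l)\in S\times\R$; the two formulations are conjugate under the bi-Lipschitz diffeomorphism $\Psi|_U$, so neither buys anything over the other.
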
 
\begin{proof}[Sketch of proof]
Lemma \ref{diffeomorphism} shows that the restriction $\Psi|_U$ of $\Psi$ to a convenient neighborhood $U$ of $(x_*,\l_*)$ in $S\times\R$ is a diffeomorphism onto a neighborhood $\Psi(U)$ of $0$ in $H$. Taking $U$ smaller, if necessary, we may assume that this diffeomorphism is Lipschitz, with Lipschitz inverse.
As in \cite{BeCaFuPe-s3}, putting $q=Lx-\l Cx$, one can transform the equation $Lx-\l Cx = -\e N(x)$ into an equivalent fixed point problem of the type $q = \e f(q)$, where $f\colon \Psi(U) \to H$ is Lipschitz continuous, with bounded image.
Thus, if $|\e| < \d$, with $\d$ is sufficiently small, the map $q \mapsto \e f(q)$ is a contraction whose image is contained in a complete subset of $\Psi(U)$, so that one gets a unique fixed point $q(\e)$ of $\e f$. Taking into account that $f$ is dominated by a constant $M$ and is Lipschitz with some constant $L$, standard computations show that, if, in addition, $\d < 1/L$, then the map $\e \mapsto q(\e)$ is Lipschitz with constant $M/(1-\d L)$.
This implies that the function $\e \in (-\d,\d) \mapsto (\Psi|_U)\sp{-1}(q(\e))$ is Lipschitz, since so is $(\Psi|_U)\sp{-1}$.
Finally, the curve $\e \mapsto (x(\e),\l(\e)) := (\Psi|_U)\sp{-1}(q(\e))$ verifies the assertion.
\end{proof}

Before proving our main theorem about global persistence we need some preliminary results regarding some crucial properties of the map
\[
\Phi\colon S\times\R\times\R \to H, \quad (x,\e,\l) \mapsto Lx+\e N(x)-\l Cx,
\]
whose set of zeros, $\Phi\sp{-1}(0)$, coincides with $\Sigma$.

Incidentally, we observe that the function $\Psi\colon S\times\R \to H$ defined in Lemma \ref{diffeomorphism} is the partial map of $\Phi$ corresponding to $\e=0$.
In other words, $\Psi$ can be, and occasionally will be, identified with the restriction of $\Phi$ to the subset $Z = S\times \{0\} \times \R$ of the domain $S\times\R\times\R$ of $\Phi$.
Because of this identification, the set $\Sigma_0=Z \cap \Sigma$ of the trivial solutions of \eqref{problem} may be regarded as $\Psi\sp{-1}(0)$.

We point out that $S\times\R\times\R$ is a $1$-codimensional submanifold of the Hilbert space $G\times\R\times\R$ and $Z$ is a $1$-codimensional submanifold of $S\times\R\times\R$.

\medskip
The next lemma provides conditions on the operators $L$, $C$ and $N$ ensuring the properness of the map $\Phi$ on bounded and closed subsets of $S\times\R\times\R$.

\begin{lemma}
\label{properness}
Regarding problem \eqref{problem}, assume that $C$ and $N$ are compact, and that $L$ is a Fredholm operator.

Then the map $\Phi$ is proper on bounded closed sets.
\end{lemma}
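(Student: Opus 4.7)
The plan is a standard sequential compactness argument. Let $D$ be a bounded closed subset of $S\times\R\times\R$ and $K$ a compact subset of $H$; we must show that $\Phi\sp{-1}(K)\cap D$ is compact. Take any sequence $(x_n,\e_n,\l_n)$ in $\Phi\sp{-1}(K)\cap D$, and let $y_n=\Phi(x_n,\e_n,\l_n)\in K$. Since $D$ is bounded and $K$ is compact, passing to a subsequence we may assume $\e_n\to\e_*$, $\l_n\to\l_*$, and $y_n\to y_*$.

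The next step is to exploit the compactness of $N$ and $C$. The sequence $\{x_n\}$ lies in the unit sphere $S$, hence is bounded in $G$, so by compactness of $N\colon S\to H$ and $C\colon G\to H$, after a further subsequence we may assume that both $N(x_n)$ and $Cx_n$ converge in $H$. Isolating $L$ in the definition of $\Phi$, we obtain
\[
Lx_n \;=\; y_n \;-\; \e_n N(x_n) \;+\; \l_n C x_n,
\]
and each term on the right-hand side converges in $H$; therefore the sequence $\{Lx_n\}$ converges, and in particular is contained in a compact subset of $H$.

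At this point the Fredholm hypothesis on $L$ enters through property (5) of the preliminaries: Fredholm operators are proper on bounded closed sets. Since $\{x_n\}$ lies in the closed bounded set $S$ and $\{Lx_n\}$ is relatively compact, $\{x_n\}$ is relatively compact in $G$. Extracting once more, $x_n\to x_*$ for some $x_*\in S$ (using that $S$ is closed). Combining with the earlier subsequence choices, $(x_n,\e_n,\l_n)\to(x_*,\e_*,\l_*)$, which belongs to $D$ because $D$ is closed, and to $\Phi\sp{-1}(K)$ by continuity of $\Phi$ and closedness of $K$. This proves $\Phi\sp{-1}(K)\cap D$ is sequentially compact, hence compact, and so $\Phi$ is proper on bounded closed sets.

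The argument is essentially routine; the only point that requires care is that one must not try to extract a convergent subsequence of $\{x_n\}$ directly (the unit sphere is not compact in infinite dimensions) but instead rely on the compactness input from $N$ and $C$ to force $\{Lx_n\}$ into a compact set, and then invoke the properness of the Fredholm operator $L$ on bounded closed subsets of $G$.
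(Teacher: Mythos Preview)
Your proof is correct and follows essentially the same approach as the paper: decompose $\Phi$ so as to isolate $Lx$, use the compactness of $N$ and $C$ to force the $Lx$-part into a compact set, and then invoke the properness of the Fredholm operator $L$ on bounded closed sets. The only cosmetic difference is that the paper packages this as a set-theoretic inclusion (showing $D\cap\Phi\sp{-1}(K)$ is a closed subset of a compact set $D\cap\mathcal L\sp{-1}(\mathcal K)$), whereas you run the same argument sequentially.
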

\begin{proof}
Let $K$ be a compact subset of $H$ and $D$ a bounded and closed subset of $S\times\R\times\R$.
We need to show that $D \cap \Phi\sp{-1}(K)$ is a compact set.
To this purpose, observe that the function $\Phi$ is the sum of three maps: $\mathcal L$, $\mathcal N$ and $\mathcal C$, given by $\mathcal L(p)=Lx$, $\mathcal N(p)=\e N(x)$ and $\mathcal C(p)=-\l Cx$, where $p=(x,\e,\l)$.

The operator $L$, being Fredholm, is proper on bounded closed sets. Consequently, as one can easily check, the map $\mathcal L$ has the same property.
Since, by assumption, the operators $C$ and $N$ are compact, so are the corresponding maps $\mathcal C$ and $\mathcal N$. Thus, $\mathcal C(D)$ and $\mathcal N(D)$ are relatively compact in $H$. Hence, the set $K - \mathcal N(D) - \mathcal C(D)$ is contained in a compact subset $\mathcal K$ of $H$.
The assertion now follows from the fact that $D \cap \Phi\sp{-1}(K)$ is a closed subset of the compact set $D \cap {\mathcal L}\sp{-1}(\mathcal K)$.
\end{proof}

The next result provides sufficient conditions for the nonlinear map $\Phi$ to be Fredholm. In order to make the statement meaningful, $N$ is assumed to be smooth.

\begin{lemma}
\label{Fredholm map}
Under the assumptions of Lemma \ref{properness} suppose, in addition, that $N$ is smooth and that the index of $L$ is zero.

Then the map $\Phi$ is Fredholm of index $1$.
\end{lemma}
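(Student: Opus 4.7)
The plan is to show that at every point $p=(x,\e,\l)\in S\times\R\times\R$, the differential $d\Phi_p$ splits as $A+K$, where $A$ is Fredholm of index $1$ and $K$ is compact; by property (3) in Section \ref{Preliminaries}, this forces $d\Phi_p$ to be Fredholm of index $1$. First I would compute the differential. Using the identification $T_p(S\times\R\times\R)=(\R x)\sp\perp\times\R\times\R$ and differentiating the three summands of $\Phi$, one gets
\[
d\Phi_p(\dot x,\dot\e,\dot\l)
= L\dot x - \l C\dot x + \e\, dN_x(\dot x) + \dot\e\, N(x) - \dot\l\, Cx.
\]

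Next I would isolate the principal part $A(\dot x,\dot\e,\dot\l):=L\dot x$. Since $L$ is assumed Fredholm of index zero on $G$ and $(\R x)\sp\perp$ is closed of codimension one in $G$, the restriction $L|_{(\R x)\sp\perp}\colon(\R x)\sp\perp\to H$ is Fredholm of index $-1$ (a standard fact about Fredholm operators: restricting to a closed subspace of finite codimension $m$ drops the index by $m$, as one checks by a short diagram chase relating $\ker T\cap(\R x)\sp\perp$ and $\Im T/\Im T|_{(\R x)\sp\perp}$). Extending the domain by the two trivial factors $\R\times\R$, which contribute $2$ to $\dim\Ker A$ without changing the image, raises the index by $2$. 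Hence $A$ is Fredholm of index $1$.

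It then remains to verify that $K:=d\Phi_p-A$ is compact. The term $\dot x\mapsto -\l C\dot x$ is compact because $C$ is; the maps $\dot\e\mapsto\dot\e N(x)$ and $\dot\l\mapsto -\dot\l Cx$ have rank at most one, hence are compact; finally, the term $\dot x\mapsto\e\, dN_x(\dot x)$ is compact provided $dN_x$ itself is a compact operator. To justify this, one extends $N$ smoothly to a neighborhood of $S$ in $G$ (for instance by $\hat N(y):=N(y/\|y\|)$ on a tubular neighborhood of $S$ avoiding the origin), obtaining a locally compact smooth map, since $\hat N$ factors through $S$ and $N(S)$ is relatively compact. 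A standard fact, which can be proved by a diagonal argument comparing the difference quotient $(\hat N(x+tv)-\hat N(x))/t$ with $d\hat N_x(v)$ uniformly on $\|v\|\le 1$ by means of the quadratic Taylor remainder, guarantees that the Fréchet differential of a locally compact smooth map is a compact operator; restricting $d\hat N_x$ to $T_x(S)=(\R x)\sp\perp$ preserves compactness, so $dN_x$ is compact.

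Putting everything together, $K$ is a sum of compact operators, hence compact, and therefore $d\Phi_p=A+K$ is Fredholm of index $1$ for every $p\in S\times\R\times\R$, which establishes that $\Phi$ is Fredholm of index $1$. The only nonroutine step is the compactness of $dN_x$: the remaining bookkeeping is standard Fredholm algebra, so this is where the bulk of the technical work (or the appeal to a known result) needs to go.
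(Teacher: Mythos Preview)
Your proof is correct and follows essentially the same route as the paper: decompose $d\Phi_p$ into a principal part coming from $L$ (Fredholm of index $1$) and a compact remainder coming from $C$ and $N$. The paper computes the index by writing $\mathcal L = L\circ J\circ P$ and adding indices $0+(-1)+2$, which is exactly your restriction-then-extension bookkeeping rephrased; your explicit justification that $dN_x$ is compact is more detailed than the paper's one-line assertion that compact smooth maps have compact differentials.
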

\begin{proof}
Let $\mathcal L,\, \mathcal N,\, \mathcal C\,\colon S\times\R\times\R \to H$ be as in the proof of Lemma \ref{properness}.
Since the smooth maps $\mathcal N$ and $\mathcal C$ are compact, so are their differentials at any $p \in S\times\R\times\R$.
Therefore, recalling that $\Phi = \mathcal L + \mathcal N + \mathcal C$, it is enough to show that $\mathcal L$ is Fredholm of index~$1$.
To this purpose observe that $\mathcal L$ is the composition of three maps: the projection $P\colon S\times\R\times\R \to S$, the inclusion $J\colon S \hookrightarrow G$ and the linear operator $L\colon G \to H$.
The first two maps, $P$ and $J$, are Fredholm of index $2$ and $-1$, respectively.
As $L$ has index $0$, the composite map $\mathcal L = L\circ J \circ P$ is Fredholm of index $2-1+0 = 1$.
\end{proof}

The next result is crucial in the proof of Theorem \ref{continuation}.

\begin{lemma}
\label{N smooth}
Let $p_* = (x_*,0,\l_*)$ be a simple solution of problem \eqref{problem}.
Assume that the operator $C$ is compact and that $N$ is compact and smooth.

Then, given a neighborhood $U$ of $p_*$ in $Z = S\times \{0\} \times \R$, there exists a neighborhood $V$ of $0 \in H$ such that, if $q \in V$ is a regular value for $\Phi$, the set $\Phi\sp{-1}(q)$ contains a smooth boundaryless connected curve that intersects $U$ and is either unbounded or diffeomorphic to a circle containing at least two points of $Z$.
\end{lemma}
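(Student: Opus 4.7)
The plan is to combine Lemma \ref{diffeomorphism}, Lemma \ref{properness} and Lemma \ref{Fredholm map} with the classification of connected boundaryless $1$-manifolds. A preliminary check: the simple-solution hypothesis forces $A := L - \l_*C$ to be Fredholm of index zero (its kernel is $\R x_*$, and by conditions (2) and (3) of Definition \ref{simple} its image has codimension one, being complemented by $\R Cx_*$); since $C$ is compact, $L = A + \l_*C$ is Fredholm of index zero too. Thus Lemma \ref{properness} and Lemma \ref{Fredholm map} both apply: $\Phi$ is Fredholm of index $1$ and is proper on bounded closed subsets of $S \times \R \times \R$.

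Next, using Lemma \ref{diffeomorphism}, I would fix an open $U_0$ with $p_* \in U_0 \subseteq U$ on which $\Psi := \Phi|_Z$ is a diffeomorphism onto $V := \Psi(U_0)$, and claim this $V$ works. For any regular value $q \in V$ of $\Phi$, let $p_0 := (\Psi|_{U_0})^{-1}(q) \in U$; by Theorem \ref{regularly defined finite dimensional manifolds} the set $\Phi^{-1}(q)$ is a smooth $1$-manifold, and by Theorem \ref{one-dimensional manifolds} its connected component $C_q$ through $p_0$ is diffeomorphic either to an open interval or to a circle. Note that the curve $C_q$ automatically intersects $U$ at $p_0$.

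If $C_q$ is diffeomorphic to an open interval, I would argue it must be unbounded: being a connected component of the closed set $\Phi^{-1}(q)$ it is closed in $S \times \R \times \R$, and if additionally bounded, Lemma \ref{properness} would force it to be compact, contradicting diffeomorphism to a noncompact $1$-manifold. If instead $C_q$ is a circle, I would prove it meets $Z$ in at least two points as follows. Since $\Psi|_{U_0}$ is a diffeomorphism, $d\Psi_{p_0}$ is an isomorphism; as it coincides with $d\Phi_{p_0}|_{T_{p_0}(Z)}$, this gives $\Ker d\Phi_{p_0} \cap T_{p_0}(Z) = \{0\}$. Hence the tangent line $\Ker d\Phi_{p_0}$ to $C_q$ at $p_0$ is not contained in the hyperplane $T_{p_0}(Z)$, i.e., $C_q$ crosses $Z$ transversally at $p_0$. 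Now $Z$ splits $S \times \R \times \R$ into two connected open subsets $Y_+ := S \times (0,\infty) \times \R$ and $Y_- := S \times (-\infty,0) \times \R$ (using connectedness of $S$), and transversality guarantees that points of $C_q$ arbitrarily close to $p_0$ lie in both $Y_+$ and $Y_-$. Were $C_q \cap Z = \{p_0\}$, the connected set $C_q \setminus \{p_0\}$ (a circle minus a point) would be forced to lie entirely in one of $Y_+$ or $Y_-$, contradicting the previous observation.

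The step I expect to require the most care is the transversality argument in the circle case: it relies on having $d\Psi$ be an isomorphism not just at $p_*$ but throughout the neighborhood $U_0$ (which is precisely what being a diffeomorphism supplies), and on the fact that $Z$ disconnects $S \times \R \times \R$ into exactly two connected components, which uses connectedness of the unit sphere $S$.
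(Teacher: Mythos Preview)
Your proof is correct and follows essentially the same route as the paper's: shrink to a neighborhood on which $\Phi|_Z$ is a diffeomorphism, invoke Lemmas \ref{properness} and \ref{Fredholm map} to get a $1$-manifold whose bounded components must be circles, and then use the injectivity of $d\Psi_{p_0}$ on $T_{p_0}(Z)$ to obtain the transversal crossing at $p_0$ that forces a second intersection with $Z$. One minor remark: the connectedness of $S$ (hence of $Y_\pm$) is not actually needed---your argument only uses that $Y_+$ and $Y_-$ are disjoint open sets and that the circle minus a point $C_q\setminus\{p_0\}$ is connected.
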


\begin{proof}
Notice that, if the assertion holds for the neighborhood $U$, then it holds as well for any $W$ such that $U \subseteq W \subseteq Z$.
Therefore, recalling that the restriction of $\Phi$ to $Z$ can be identified with the map $\Psi$ of Lemma \ref{diffeomorphism}, we may assume that $U$ is so small that it is mapped by $\Phi$ diffeomorphically onto a neighborhood $V$ of $0 \in H$.
Moreover, as $p_*$ is simple, the operator $A=L-\l_*C$ is Fredholm of index zero. Consequently, so is $L$, because of the compactness of $C$.

Let $q \in V$ be a regular value for $\Phi$.
Since, according to Lemma \ref{Fredholm map}, the map $\Phi$ is Fredholm of index $1$, the set $\Phi\sp{-1}(q)$ is a smooth (boundaryless) $1$-dimensional submanifold of $S\times\R\times\R$ (see Theorem \ref{regularly defined finite dimensional manifolds}).
Any component of $\Phi\sp{-1}(q)$ is a closed subset of $S\times\R\times\R$, which is either compact, and therefore diffeomorphic to a circle, or non-compact, and consequently diffeomorphic to the open interval $(0,1)$, according to Theorem \ref{one-dimensional manifolds}.

Denote by $\Gamma$ the component of $\Phi\sp{-1}(q)$ containing the unique point $p \in U$ such that $\Phi(p) = q$.
In particular the assertion that $\Gamma \cap U$ is nonempty is verified.

Now, if $\Gamma$ is unbounded, the proof is completed.
Suppose, on the contrary, that it is bounded.
Then, due to the properness of $\Phi$ on bounded and closed sets ensured by Lemma \ref{properness}, the component $\Gamma$ is compact and, therefore, diffeomorphic to the unit circle $S\sp{1}$.

Therefore, to complete the proof, it is sufficient to show that $\Gamma$ intersects $Z$ at some point different from $p$.
Namely, it is enough to prove that the continuous function
$
\s\colon (x,\e,\l) \in \Gamma \mapsto \e \in \R
$
vanishes at some point of $\Gamma\setminus \{p\}$.

To this purpose we will show that the disjoint open subsets
\[
\Gamma_- = \s\sp{-1}((-\infty,0)) \quad \text{and} \quad 
\Gamma_+ = \s\sp{-1}((0,+\infty))
\]
of the connected set $\Gamma\setminus \{p\}$ are both nonempty; and this will be obvious if we show that the intersection at $p$ between the curve $\Gamma$ and the manifold $Z$ is transversal, which implies that $\s$ has a sign-jump at $p$. 
Indeed, the transversality is a consequence of the fact that the point $p$, apart of being regular for the map $\Phi$, is as well regular for the restriction of $\Phi$ to $U$, due to the diffeomorphism $\Phi|_U\colon U \to V$. In fact, as $p$ is regular for $\Phi$, one has $T_p(\Gamma) = \Ker d\Phi_p$ (Theorem \ref{regularly defined finite dimensional manifolds}); and this $1$-dimensional space is not contained in $T_p(Z)$, since the operator $d(\Phi|_U)_p\colon T_p(Z) \to H$, which coincides with the restriction of $d\Phi_p$ to $T_p(Z)$, is injective (it is actually invertible).

In conclusion, the union of the open sets $\Gamma_-$ and $\Gamma_+$ cannot coincide with che connected set $\Gamma\setminus \{p\}$. Therefore, the function $\s$ vanishes at some point of $\Gamma\setminus \{p\}$; and this concludes the proof.
\end{proof}

The next lemma is a Whyburn-type topological result which is crucial in the proof of Theorem \ref{continuation}.
\begin{lemma}[\cite{FuPe}]
\label{Whyburn}
Let $Y_0$ be a compact subset of a locally compact metric space~$Y$.
Assume that every compact subset of $Y$ containing $Y_0$ has nonempty boundary.
Then $Y \backslash Y_0$ contains a connected set whose closure in $Y$ is non-compact and intersects~$Y_0$.
\end{lemma}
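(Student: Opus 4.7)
The plan is to argue by contradiction: assume that every connected subset of $Y\setminus Y_0$ whose closure in $Y$ meets $Y_0$ has compact closure in $Y$. I will then produce a compact clopen neighborhood of $Y_0$ in $Y$, contradicting the hypothesis that every compact subset of $Y$ containing $Y_0$ has nonempty boundary.

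The principal tool is the classical Whyburn separation lemma for compact Hausdorff spaces: if disjoint closed subsets $A,B$ of a compactum $X$ cannot be joined by a continuum in $X$, then $X$ admits a clopen decomposition separating $A$ from $B$. I would apply it to the one-point compactification $Y^{*}=Y\cup\{\infty\}$ of $Y$, which is compact Hausdorff by local compactness, with the disjoint closed subsets $Y_0$ and $\{\infty\}$. This yields the dichotomy: either (a) some continuum $E\subseteq Y^{*}$ meets both $Y_0$ and $\{\infty\}$, or (b) there is a clopen decomposition $Y^{*}=U\sqcup V$ with $Y_0\subseteq U$ and $\infty\in V$. In case (b) the piece $U$ is compact (closed in the compact space $Y^{*}$), contained in $Y$, and open in $Y$ (as it is open in $Y^{*}$), so $U$ is a compact clopen neighborhood of $Y_0$ in $Y$ with empty boundary, contradicting the hypothesis. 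Thus case (a) must hold.

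In case (a) I would shrink $E$ by Zorn's lemma to an irreducible sub-continuum $E_{*}\subseteq E$ between $Y_0$ and $\{\infty\}$ — i.e.\ a continuum meeting both sets no proper sub-continuum of which does so — using the standard fact that an intersection of a nested family of continua in a compactum is again a continuum. Then I would examine the connected components of $E_{*}\setminus(Y_0\cup\{\infty\})\subseteq Y\setminus Y_0$: by Janiszewski's boundary bumping theorem applied in the continuum $E_{*}$, each such component $C$ has closure in $E_{*}$ meeting $Y_0\cup\{\infty\}$. The crucial claim is that some component $C$ has closure meeting \emph{both} $Y_0$ and $\{\infty\}$; such a $C$, viewed as a subset of $Y\setminus Y_0$, is connected, its closure in $Y$ meets $Y_0$, and its closure in $Y$ is non-compact (since $\infty$ lies in its $Y^{*}$-closure), contradicting the standing assumption.

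The main obstacle is the crucial claim. The intended argument is that if no component had closure meeting both, then the components would split into a ``$Y_0$-family'' (closure meets only $Y_0$) and an ``$\infty$-family'' (closure meets only $\{\infty\}$), and the unions of their closures — together with $Y_0\cap E_{*}$ and $\{\infty\}$ respectively — would provide a separation of $E_{*}$, violating its connectedness. Making this rigorous requires precluding the possibility that a sequence of $Y_0$-family components accumulates at $\infty$, or vice versa; this is where the irreducibility (minimality) of $E_{*}$ enters crucially, for instance via the Kuratowski layer-decomposition theorem for irreducible continua. This is the technical heart of the proof.
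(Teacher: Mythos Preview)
The paper does not supply its own proof of this lemma: it is quoted from the reference~[FuPe] (Furi--Pera) and used as a black box in the proof of Theorem~\ref{continuation}. So there is no in-paper argument to compare your proposal against.

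Judged on its own, your overall strategy---pass to the one-point compactification $Y^{*}$ and invoke the Whyburn separation lemma for the disjoint closed sets $Y_0$ and $\{\infty\}$---is the standard route to statements of this type, and your treatment of case~(b) is correct and complete.

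In case~(a), however, the proposal has a genuine gap precisely where you flag it. The ``two-family'' separation argument does not go through as sketched: the sets
\[
(Y_0\cap E_*)\ \cup\!\bigcup_{C\ \text{in $Y_0$-family}}\!\overline{C}
\qquad\text{and}\qquad
\{\infty\}\ \cup\!\bigcup_{C\ \text{in $\infty$-family}}\!\overline{C}
\]
are infinite unions of closed sets and need not be closed, so one family may well accumulate on the other endpoint, and the decomposition need not separate the connected set $E_*$. Your appeal to the Kuratowski layer decomposition is not convincing as stated: that machinery is developed for continua irreducible between two \emph{points}, producing an upper-semicontinuous monotone decomposition onto an arc, and it is unclear how you would adapt it when one ``end'' is the set $E_*\cap Y_0$ rather than a single point. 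At minimum you would first have to reduce to the two-point situation (e.g.\ by replacing $E$ with the closure of the component of $\infty$ in $E\setminus Y_0$, then passing to a subcontinuum irreducible between $\infty$ and a single chosen $y_0\in Y_0$), and even then the assertion that removing the two endpoints from an irreducible continuum leaves a connected set---or at least a component whose closure hits both---requires a real argument, not just a citation. As written, the proposal identifies the crux honestly but does not close it.
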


Recalling that $\Sigma$ denotes the set of solutions of problem \eqref{problem} and that $\Sigma_0$ is its subset of the trivial ones, the following simple result provides an example of locally compact space, needed in the proof of Theorem \ref{continuation}.

\begin{lemma}
\label{Y locally compact}
Assume that the operators $N$ and $C$ are compact and that $p_*$ is a simple solution of problem \eqref{problem}.

Then, the metric subspace $Y = (\Sigma \setminus \Sigma_0) \cup \{p_*\}$ of $\Sigma$ is locally compact.
\end{lemma}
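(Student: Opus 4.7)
The plan is to reduce local compactness of $Y$ to two ingredients: local compactness of the entire solution set $\Sigma$ at each of its points (coming from properness of $\Phi$), and the isolation of the base point $p_*$ within $\Sigma_0$ (coming from Lemma~\ref{diffeomorphism}). The single nontrivial observation in the whole argument is that, although Lemma~\ref{properness} requires $L$ to be Fredholm while only $N$ and $C$ are explicitly assumed compact in the present lemma, this missing piece is automatic from the simplicity of $p_*$: the operator $A = L-\l_*C$ has $\Ker A=\R x_*$ by condition~$(1)$ of Definition~\ref{simple}, and, since $Cx_*\neq 0$, the splitting condition~$(3)$ forces $\Im A$ to have codimension one in $H$, so $A$ is Fredholm of index zero; because $C$ is compact, property~$(3)$ of Fredholm operators then makes $L = A + \l_*C$ Fredholm of index zero as well. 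This is the main subtle point; the remainder is routine topological bookkeeping.

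With $L$ Fredholm and $N,C$ compact, Lemma~\ref{properness} immediately yields that $\Phi$ is proper on bounded closed subsets of $S\times\R\times\R$, and therefore that $\Sigma = \Phi\sp{-1}(0)$ is locally compact at each of its points: given any $p\in\Sigma$, a sufficiently small closed metric ball $B$ around $p$ is bounded (recall $x$ lies on the unit sphere $S$), and $B\cap\Sigma = B\cap\Phi\sp{-1}(\{0\})$ is consequently compact.

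To promote this to local compactness of $Y$, I would split into two cases. If $p\in\Sigma\setminus\Sigma_0$, then the $\e$-coordinate of $p$ is nonzero, so I can shrink $B$ to lie entirely inside $\{\e\neq 0\}$; then $B\cap\Sigma_0 = \emptyset$, and hence $B\cap Y = B\cap\Sigma$, which is compact by the previous paragraph. For the remaining case $p = p_*$, Lemma~\ref{diffeomorphism} enters the picture: since $\Psi$ is a diffeomorphism from a neighborhood of $(x_*,\l_*)$ in $S\times\R$ onto a neighborhood of $0\in H$, the point $(x_*,\l_*)$ is the only zero of $\Psi$ in some such neighborhood. Identifying $\Psi$ with the restriction of $\Phi$ to $Z = S\times\{0\}\times\R$, this says precisely that $p_*$ is isolated in $\Sigma_0$. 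Choosing $B$ small enough that $B\cap\Sigma_0 = \{p_*\}$ then gives once more $B\cap Y = B\cap\Sigma$, which is compact, completing the proof.
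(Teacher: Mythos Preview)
Your proof is correct and follows essentially the same route as the paper: both rely on Lemma~\ref{properness} for local compactness of $\Sigma$ and on Lemma~\ref{diffeomorphism} for the isolation of $p_*$ in $\Sigma_0$. The paper packages the conclusion slightly more concisely by observing that $Y = \Sigma \setminus (\Sigma_0 \setminus \{p_*\})$ is an open subset of the locally compact space $\Sigma$, rather than doing your two-case check; on the other hand, your explicit verification that $L$ is Fredholm (via $A = L - \l_*C$ and the compactness of $C$) fills in a step that the paper's proof of this lemma silently borrows from elsewhere.
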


\begin{proof}
Observe that $\Sigma_0 = Z \cap \Sigma$, where $Z = S\times \{0\}\times \R$.
Therefore $\Sigma_0$ is closed in the metric space $\Sigma = \Phi\sp{-1}(0)$, which is locally compact because of Lemma \ref{properness}.
Clearly $Y$ coincides with $\Sigma \setminus (\Sigma_0 \setminus \{p_*\})$. Moreover $\Sigma_0 \setminus \{p_*\}$ is closed in $\Sigma$, the point $p_*$ being isolated in the closed set $\Sigma_0$ because of Lemma \ref{diffeomorphism}.
Thus, the metric space $Y$, as an open subset of $\Sigma$, is locally compact.
\end{proof}

From Lemmas \ref{Whyburn} and \ref{Y locally compact} we derive the following

\begin{lemma}
\label{metric pair}
Let the operators $N$ and $C$ be compact, and let $p_*$ be a simple solution of problem \eqref{problem}.
Assume that any compact subset of $Y = (\Sigma \setminus \Sigma_0) \cup \{p_*\}$ containing $p_*$ has nonempty boundary in $Y$.

Then $\Sigma \setminus \Sigma_0$ has a connected set whose closure in $\Sigma$ contains $p_*$ and is either unbounded or meets a trivial solution $p\sp* \not= p_*$.
\end{lemma}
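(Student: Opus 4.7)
The plan is to apply the Whyburn-type Lemma \ref{Whyburn} in the locally compact metric space $Y = (\Sigma\setminus\Sigma_0)\cup\{p_*\}$ (local compactness being guaranteed by Lemma \ref{Y locally compact}), taking as $Y_0$ the singleton $\{p_*\}$, which is obviously compact. The hypothesis of Lemma \ref{metric pair} is precisely the hypothesis of Lemma \ref{Whyburn} for this choice of $Y_0$. Hence we obtain a connected subset $\Gamma$ of $Y\setminus \{p_*\}=\Sigma\setminus\Sigma_0$ whose closure $\overline{\Gamma}^{Y}$ in $Y$ is non-compact and contains $p_*$.

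Next, I would pass from the closure in $Y$ to the closure in $\Sigma$. Denote by $\overline{\Gamma}^{\Sigma}$ the closure of $\Gamma$ in $\Sigma$. Since $Y\subseteq \Sigma$, one has $\overline{\Gamma}^{Y}\subseteq \overline{\Gamma}^{\Sigma}$, and in particular $p_*\in \overline{\Gamma}^{\Sigma}$. Moreover, the general identity $\overline{\Gamma}^{Y}=\overline{\Gamma}^{\Sigma}\cap Y$ holds because $Y$ carries the subspace topology inherited from $\Sigma$.

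The conclusion now splits into two cases. If $\overline{\Gamma}^{\Sigma}$ is unbounded, then $\Gamma$ is the desired connected subset of $\Sigma\setminus\Sigma_0$ and the assertion is proved. Assume, on the contrary, that $\overline{\Gamma}^{\Sigma}$ is bounded; since it is a closed and bounded subset of $S\times\R\times\R$ contained in $\Phi^{-1}(0)$, the properness of $\Phi$ on bounded closed sets provided by Lemma \ref{properness} forces $\overline{\Gamma}^{\Sigma}$ to be compact. I claim that $\overline{\Gamma}^{\Sigma}$ must meet $\Sigma_0\setminus\{p_*\}$. Otherwise, $\overline{\Gamma}^{\Sigma}\subseteq Y$, and therefore $\overline{\Gamma}^{Y}=\overline{\Gamma}^{\Sigma}\cap Y=\overline{\Gamma}^{\Sigma}$ would be compact, contradicting the non-compactness of $\overline{\Gamma}^{Y}$ obtained from Whyburn's lemma.

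I do not anticipate a serious obstacle: all the work has been front-loaded into Lemmas \ref{properness}, \ref{Y locally compact} and \ref{Whyburn}, and what remains is a short topological bookkeeping argument. The only delicate point is the interplay between closures in the two different ambient spaces $Y$ and $\Sigma$, and in particular the observation that once $\overline{\Gamma}^{\Sigma}$ is known to be compact, its failure to intersect $\Sigma_0\setminus \{p_*\}$ would make it coincide with $\overline{\Gamma}^{Y}$, yielding a contradiction.
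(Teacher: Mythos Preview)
Your argument is correct and essentially identical to the paper's own proof: apply Lemma~\ref{Whyburn} to the pair $(Y,\{p_*\})$ (using Lemma~\ref{Y locally compact}), then show that if the closure in $\Sigma$ of the resulting connected set is bounded (hence compact by Lemma~\ref{properness}) but misses $\Sigma_0\setminus\{p_*\}$, it would coincide with its closure in $Y$, contradicting non-compactness. The only cosmetic difference is notation ($\Gamma$ versus the paper's $\mathcal D$) and your slightly more explicit handling of the subspace-closure identity $\overline{\Gamma}^{Y}=\overline{\Gamma}^{\Sigma}\cap Y$.
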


\begin{proof}
Because of Lemma \ref{Y locally compact} and our assumption, Lemma \ref{Whyburn} applies to the metric pair $(Y,Y_0)$, $Y_0=\{p_*\}$.
Consequently, $Y \backslash Y_0 = \Sigma\backslash \Sigma_0$ has a connected subset, say $\mathcal D$, whose closure $\bar{\mathcal D} \cap Y$ in $Y$ is non-compact and contains $p_*$ (here $\bar{\mathcal D}$ denotes the closure of $\mathcal D$ in $\Sigma$ or, equivalently, in $S\times\R\times\R$).

Since $p_*$ belongs to the connected set $\bar{\mathcal D}$ (actually $p_* \in \bar{\mathcal D} \cap Y$), it is sufficient to show that, if $\bar{\mathcal D}$ is bounded (hence compact, because of Lemma \ref{properness}), then it must intersect $\Sigma_0\setminus \{p_*\}$, and this is clearly true since otherwise $\bar{\mathcal D}$ would coincide with $\bar{\mathcal D} \cap Y$, which is non-compact.
\end{proof}

We are now in a position to prove our main result regarding problem \eqref{problem}.

\begin{theorem} 
[Global continuation of solution triples]
\label{continuation}
Regarding problem \eqref{problem}, assume that the spaces $G$ and $H$ are separable, that the operators $N$ and $C$ are compact, and that $p_*=(x_*,0,\l_*)$ is a simple solution.

Then, the set $\Sigma \setminus \Sigma_0$ of the nontrivial solutions has a connected subset whose closure in $\Sigma$ contains $p_*$ and is either unbounded or meets a trivial solution $p\sp* \not= p_*$.
\end{theorem}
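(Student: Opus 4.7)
The plan is to verify the hypothesis of Lemma \ref{metric pair} by contradiction. Suppose there exists a compact $K \subseteq Y = (\Sigma \setminus \Sigma_0) \cup \{p_*\}$ with $p_* \in K$ and empty boundary in $Y$. By Lemma \ref{diffeomorphism}, $p_*$ is isolated in $\Sigma_0$, so $Y$ is open in $\Sigma$ and hence $K$ is clopen in $\Sigma$. Since $K$ is compact and $\Sigma \setminus K$ is closed in $S\times\R\times\R$, the two sets have strictly positive distance, and I can choose a bounded open neighborhood $W$ of $K$ with $\overline{W} \cap \Sigma = K$. In particular $\overline{W}\cap\Sigma_0=\{p_*\}$ and $\partial W \cap \Sigma = \emptyset$.

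Next, I will approximate $N$ by a sequence of smooth compact maps $N_k \colon S \to H$ converging to $N$ uniformly on $S$; this is possible using the separability of $G$ and the relative compactness of $N(S)$, via finite-dimensional-range smooth approximants built with smooth partitions of unity on the Hilbert manifold $S$. Setting $\Phi_k(x,\e,\l) = Lx + \e N_k(x) - \l Cx$, the triple $p_*$ remains a simple solution of each perturbed problem (simplicity involves only $L$, $C$, and $\l_*$), and $L$ is Fredholm of index zero since $L-\l_*C$ is so by simplicity and $C$ is compact; hence Lemmas \ref{properness}, \ref{Fredholm map} and \ref{N smooth} apply to each $\Phi_k$. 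I fix a neighborhood $U$ of $p_*$ in $Z$ with $U \subseteq W$ on which $\Psi|_U$ is a diffeomorphism onto a neighborhood of $0 \in H$. By the Sard--Smale theorem (density of regular values for Fredholm maps between second countable manifolds, second countability coming from separability of $G$), I select regular values $q_k$ of $\Phi_k$ with $q_k \to 0$ lying in the neighborhood furnished by Lemma \ref{N smooth}. This produces smooth connected curves $\Gamma_k \subseteq \Phi_k^{-1}(q_k)$ meeting $U$, each either unbounded or diffeomorphic to a circle containing at least two points of $Z$.

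The contradiction will be extracted by tracking where $\Gamma_k$ can go. If $\Gamma_k$ is unbounded, or if $\Gamma_k$ is a circle whose second intersection $p'_k$ with $Z$ lies outside $W$, then by connectedness $\Gamma_k$ meets $\partial W$, yielding $p_k \in \partial W$ with $\Phi_k(p_k) = q_k$; since $\Phi_k \to \Phi$ uniformly on $\overline{W}$ and $q_k \to 0$, and $\Phi$ is proper on bounded closed sets (Lemma \ref{properness}), a subsequence of the $p_k$ converges to a point of $\partial W \cap \Sigma = \emptyset$, a contradiction. In the remaining case the second intersection $p'_k = (x'_k,0,\l'_k) \in W$ satisfies $\Psi(x'_k,\l'_k) = q_k \to 0$, so by the same properness $p'_k$ converges (subsequentially) to a point of $\Sigma_0 \cap \overline{W} = \{p_*\}$; but then $p'_k$ lies in $U$ for large $k$, contradicting the uniqueness of the preimage of $q_k$ in $U$ under the diffeomorphism $\Psi|_U$.

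The step I expect to be most delicate is the smooth compact approximation together with the subsequent limit argument: one must build $N_k$ smooth, compact, and uniformly close to $N$ on $\overline{W}$, and use this uniform closeness together with the properness of $\Phi$ to transfer the equations $\Phi_k(p_k)=q_k$ to $\Phi(p_\infty)=0$ in the limit; combined with the infinite-dimensional Sard--Smale theorem, this is where the separability hypothesis is genuinely used. Granted these technical ingredients, Lemma \ref{metric pair} delivers the connected subset of $\Sigma \setminus \Sigma_0$ whose closure in $\Sigma$ contains $p_*$ and is either unbounded or meets another trivial solution.
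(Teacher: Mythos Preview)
Your proof is correct and follows essentially the same strategy as the paper's: assume a compact $K$ with empty boundary in $Y$, isolate it inside a bounded open $W$ with $\partial W\cap\Sigma=\emptyset$, replace $N$ by smooth finite-rank approximations, apply Sard--Smale and Lemma~\ref{N smooth} to produce curves $\Gamma_k$ that must exit $W$, and use properness (Lemma~\ref{properness}) to push a point onto $\partial W\cap\Sigma$. The only notable difference is organizational: the paper arranges $U = Z\cap W$ (by shrinking $W$), so that any second $Z$-intersection of $\Gamma_\delta$ automatically lies outside $W$ and your ``remaining case'' never arises; your separate treatment of that case via $p'_k \to p_*$ and injectivity of $\Psi|_U$ is valid but could be bypassed with this choice.
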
 

\begin{proof}
Denoting by $Y$ the locally compact metric space $(\Sigma \setminus \Sigma_0) \cup \{p_*\}$, it is sufficient to apply Lemma \ref{metric pair} by proving that \textsl{any compact subset of $Y$ containing $p_*$ has nonempty boundary in $Y$}.

By contradiction, assume there exists a compact subset $K$ of $Y$ containing $p_*$ whose boundary, in $Y$, is empty.
This compact set is open in $Y$, therefore it is far away from its (relatively) closed complement $Y\setminus K$.
Actually, it is also far away from $\Sigma\setminus K = (Y\setminus K) \cup( \Sigma_0\setminus \{p_*\})$, this set being disjoint from $\Sigma_0\setminus \{p_*\}$, which is closed according to Lemma \ref{diffeomorphism}.
Consequently, there exists a bounded open subset $W$ of $S\times\R\times\R$ such that $W\cap \Sigma= K$ and
\begin{equation}
\label{empty}
\partial W\cap \Sigma=\emptyset.
\end{equation}
Because of Lemma \ref{diffeomorphism}, we may suppose that the intersection $U = Z \cap W$ is mapped by $\Phi$ diffeomorphically onto a neighborhood $V$ of the origin $0 \in H$.

Since $G$ is separable, given $\d > 0$, there exists a smooth map $N_\d \colon S \to H$ with finite dimensional image and such that $\|N(x)-N_\d(x)\| < \d$ for all $x \in S$.
In fact, a well known result in separable Hilbert spaces (see, for example, \cite{Eells} and references therein) ensures the existence of a smooth $(\d/2)$-approximation $\hat N_\d$ of $N$, so that the required map $N_\d$ is obtained by composing $\hat N_\d$ with the orthogonal projection of $H$ onto the vector space spanned by a $(\d/2)$-net of the totally bounded set $N(S)$.
Now, define $\Phi_\d \colon S\times\R\times\R \to H$ by $\Phi_\d(x,\e,\l) = (L-\l C)x + \e N_\d(x)$.
This map is a nonlinear Fredholm operator between two separable Hilbert manifolds. Therefore, the celebrated Sard--Smale result \cite{Smale} implies the existence of a regular value $q_\d \in V$ of $\Phi_\d$ such that $\|q_\d\| < \d$.
By Lemma \ref{N smooth} we deduce that $\Phi\sp{-1}_\d(q_\d)$ has a connected subset $\Gamma_\d$ that intersects $U$ and is either unbounded or contains at least two points of $Z$.
Since $\Phi$ maps $U$ diffeomorphically onto $V$ (and coincides with $\Phi_\d$ on $U$), one and only one of the points of $\Gamma_\d$ lies in $U$.
Therefore, in any of the two cases the connected set $\Gamma_\d$ must have points outside the bounded set $W$ and, consequently, must contain at least one point $p_\d \in \partial W$.

Now, denoting $p_\d = (x_\d, \e_\d, \l_\d)$, we have
\[
\|\Phi(p_\d)\| \leq \|\Phi_\d(p_\d)\| + \|\Phi(p_\d)- \Phi_\d(p_\d)\| = \|q_\d\| + |\e_\d|\|N(x_\d)-N_\d(x_\d)\| \,\leq \d +c\d,
\]
where $c = \sup\{|\e| : (x,\e,\l) \in \partial W\}$.

As a consequence we get $\inf\{\|\Phi(p)\|: p \in \partial W\} = 0$.
Thus, the properness of $\Phi$ on $\partial W$ (ensured by Lemma \ref{properness}) implies the existence of a compact subset of $\partial W$ in which the infimum (and therefore the minimum) of the real functional $\|\Phi\|$ is zero (to see this consider a minimizing sequence of $\|\Phi\|$ in $\partial W$, then add $0 \in H$ to the image of this sequence in order to get a compact subset $\mathcal K$ of $H$, and take the set $\Phi\sp{-1}(\mathcal K) \cap \partial W$).

Hence, we obtain $\partial W\cap \Phi\sp{-1}(0)=\partial W\cap \Sigma\not=\emptyset$, contradicting \eqref{empty}, so that Lemma \ref{metric pair} applies.
\end{proof}

\begin{remark}
\label{the component}
Let $p_* = (x_*,0,\l_*)$ be a trivial solution of problem \eqref{problem}, and assume that the connected component of $\Sigma$ containing $p_*$, call it $\Gamma$, is bounded.

Under the assumptions of Theorem \ref{continuation}, taking into account that the closure of a connected set is still connected, one gets that
\begin{equation}
\label{gammaassert}
\text{$\Gamma$ meets a point $p\sp* = (x\sp*,0,\l\sp*)$ different from $p_*$}.
\end{equation}

Consequently, one necessarily has $x^* \not= x_*$, since otherwise the condition $Cx_* \not= 0$ would imply $\l^*=\l_*$, contradicting $p^* \not= p_*$.
Obviously, one could have $\l^*= \l_*$, but in this case, $p_*$ being simple, one would obtain $x^*= -x_*$. 

We point out that assertion \eqref{gammaassert} is trivially satisfied even in most cases in which $p_*$ is not simple. Namely, whenever $\dim (\Ker(L-\l_* C)) > 1$, since $\Gamma$ contains (and possibly coincides with) the geometric sphere
\[
\big(\Ker(L-\l_* C) \cap S\big)\times \{(0,\l_*)\},
\]
made up of infinitely many trivial solutions, all having the same eigenvalue $\l_*$.

Neverthless, when $p_*$ is simple, Lemma \ref{diffeomorphism} ensures that it is isolated in the set $\Sigma_0$ of the trivial solutions; consequently, assertion \eqref{gammaassert} implies that $\Gamma$ is not contained in $\Sigma_0$.
\end{remark}

Under the assumptions of our main result, Theorem \ref{continuation of eigenpairs intro} ensures that,
\emph{in the set $\mathcal E$ of all the eigenpairs of \eqref{problem}, the connected component containing the simple eigenpair $(0,\l_*)$ is either unbounded or meets a trivial eigenpair $(0,\l\sp*)$ with $\l\sp* \not= \l_*$.}
For this reason we are inclined to believe that Theorem \ref{continuation} could be extended according to Conjecture \ref{conjecture-intro}.

Sometimes, a bounded linear operator $A\colon G \to H$ acting between real Hilbert spaces is easily recognized as Fredholm of index zero. For example, this happens when $G$ and $H$ have the same finite dimension, or when $A$ is a compact linear perturbation of one of the following operators (see e.g.~\cite{Taylor & Lay}):
\begin{itemize}
\item
an invertible operator $L\colon G \to H$;
\item
the restriction $L$ of a surjective operator $\hat L\colon \hat G \to H$ to a closed subspace $G$ of $\hat G$, whose codimension is finite and the same as $\Ker\hat L$ ($x\in G$ can be regarded as a sort of boundary condition).
\item
a self-adjoint operator $L\colon G \to G$ for which $0 \in \R$ is an isolated eigenvalue of finite multiplicity.
\end{itemize}

In these cases, in order to verify that a trivial solution of problem \eqref{problem} is simple, it may be convenient to take into consideration the following

\begin{proposition} 
\label{unsolvability}
Let $(x_*,0,\l_*)$ be a trivial solution of problem \eqref{problem}.
Assume that $\l_* \in \R$ and $x_* \in S$ are such that
\begin{itemize}
\item[(0)]
$A = L-\l_*C$ is Fredholm of index zero
\item[(1)]
$\Ker A = \R x_*$.
\end{itemize}
Then, $(x_*,0,\l_*)$ is simple if and only if the equation
$
Ax = Cx_*
$
is unsolvable in $G$.
\end{proposition}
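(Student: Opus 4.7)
The plan is to unpack Definition \ref{simple} under the standing hypotheses (0) and (1) and reduce it to a single non-membership condition. First, I would use (0) together with (1) to pin down the codimension of $\Im A$ exactly: being Fredholm of index zero with $\dim \Ker A = 1$ forces $\dim \coKer A = 1$, so $\Im A$ is a closed subspace of $H$ of codimension one.

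Next, I would rewrite clause (3) of Definition \ref{simple} using (1). Since $\Ker A = \R x_*$, one has $C(\Ker A) = \R Cx_*$, so clause (3) reads $\Im A \oplus \R Cx_* = H$. Here comes the key elementary observation, which is where one spends most of the (very short) argument: in a space $H$ admitting a closed codimension-one subspace $M$, a vector $v \in H$ satisfies $M \oplus \R v = H$ if and only if $v \notin M$. Applied with $M = \Im A$ and $v = Cx_*$, this shows that clause (3) is equivalent to $Cx_* \notin \Im A$. Moreover, since $0 \in \Im A$, the condition $Cx_* \notin \Im A$ automatically forces $Cx_* \neq 0$, so clause (2) is subsumed. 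Thus, under (0) and (1), simplicity of $(x_*,0,\l_*)$ is equivalent to the single condition $Cx_* \notin \Im A$.

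Finally, the condition $Cx_* \notin \Im A$ is by definition the unsolvability in $G$ of the equation $Ax = Cx_*$. Chaining the equivalences of the previous paragraph yields the proposition. I do not foresee any real obstacle: once (0) and (1) are combined to fix $\codim \Im A = 1$, the rest is a dimension count. The only point worth stressing in writing is that the direct-sum clause \emph{automatically} delivers the apparently separate clause $Cx_* \neq 0$, so no independent verification of (2) is needed.
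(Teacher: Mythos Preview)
Your proposal is correct and follows essentially the same approach as the paper's proof: both use (0) and (1) to fix $\codim \Im A = 1$, then reduce clauses (2) and (3) of Definition~\ref{simple} to the single condition $Cx_* \notin \Im A$. Your presentation packages this as a chain of equivalences, whereas the paper writes out the two implications separately, but the content is identical.
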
 

\begin{proof}
Assume first that the equation $Ax = Cx_*$ has no solutions.
Thus, condition $(2)$ of Definition \ref{simple} is satisfied (otherwise the above equation would admit the trivial solution) and, obviously, $Cx_* \not\in \Im\!{A}$.
Consequently, since conditions $(0)$ and $(1)$ imply that $\Im\!A$ has codimension one, the one-dimensional subspace $\R Cx_* = C(\Ker A)$ of $H$ intersects $\Im A$ transversally, and this proves that condition $(3)$ of Definition \ref{simple} is as well satisfied.

Conversely, since, as already pointed out, $\Im A$ has codimension one, conditions (1) and (3) of Definition \ref{simple} imply that $Cx_* \not\in \Im A$; that is, $Ax = Cx_*$ has no solutions.
\end{proof}

\smallskip
We close this section with a consequence of Theorem \ref{continuation} regarding the perturbed finite dimensional classical eigenvalue problem.

\begin{corollary} 
[Global continuation in finite dimension \cite{BeCaFuPe-s3}]
\label{continuation: finite dimension}
In problem \eqref{problem}, let $G=H$ be finite dimensional.
Suppose that $C$ is the identity $I$ of $G$, and let $x_* \in S$ be an eigenvector of $L$ corresponding to a simple eigenvalue $\l_*$.

Then, the trivial solution $p_*=(x_*,0,\l_*)$ of \eqref{problem} is simple and, consequently, the set $\Sigma \setminus \Sigma_0$ has a connected subset whose closure contains $p_*$ and is either unbounded or meets a trivial solution $p\sp* \not= p_*$.

\end{corollary}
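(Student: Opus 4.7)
The strategy is to reduce the corollary to Theorem \ref{continuation}: once it is verified that $p_*=(x_*,0,\l_*)$ is simple in the sense of Definition \ref{simple} and that the compactness and separability hypotheses of that theorem hold, the conclusion about the connected subset of $\Sigma \setminus \Sigma_0$ is immediate. I therefore split the argument into two parts: (a) checking the hypotheses of Theorem \ref{continuation}, and (b) showing simplicity of $p_*$.

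Part (a) is essentially automatic in the finite-dimensional setting. Since $G=H$ is finite dimensional, it is separable, and the identity $C=I$ is compact because closed bounded subsets of a finite-dimensional normed space are compact. The map $N\colon S\to H$ is continuous on the compact sphere $S$, so $N(S)$ is compact and any bounded subset of $S$ is mapped into this compact image; hence $N$ is compact as well.

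Part (b) is the substantive step. Set $A=L-\l_*I$; I need to verify the three conditions of Definition \ref{simple}. Condition $(2)$ is trivial, since $Cx_*=x_*$ has unit norm. Condition $(1)$ follows because a classically simple eigenvalue has in particular geometric multiplicity one, so $\Ker A=\R x_*$. For condition $(3)$, I plan to apply Proposition \ref{unsolvability}: $A$ is Fredholm of index zero (being an endomorphism of a finite-dimensional space) and condition $(1)$ already holds, so simplicity of $p_*$ is equivalent to the unsolvability of $Ay=Cx_*=x_*$. Suppose, for contradiction, that $Ay=x_*$ for some $y\in G$. Then $A^2 y=Ax_*=0$, so $y\in\Ker A^2$; but algebraic multiplicity one of $\l_*$ is precisely the statement that the generalized eigenspace coincides with the eigenspace, i.e.\ $\Ker A^2=\Ker A=\R x_*$. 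Hence $y=\a x_*$ for some $\a\in\R$, and applying $A$ yields $x_*=Ay=0$, contradicting $\|x_*\|=1$.

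The only delicate point is the translation between the classical notion of ``simple eigenvalue'' (algebraic multiplicity one), invoked in the hypotheses of the corollary, and the notion introduced in Definition \ref{simple}. The bridge between them is the standard Jordan-form fact that algebraic simplicity of $\l_*$ forces $\Ker A^k=\Ker A$ for every $k\geq 1$; with this at hand, the argument above is purely formal, and Theorem \ref{continuation} then delivers the conclusion.
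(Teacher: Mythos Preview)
Your proof is correct and follows essentially the same route as the paper: both verify simplicity of $p_*$ via Proposition~\ref{unsolvability} by showing that $Ay=x_*$ is unsolvable, arguing that a solution would force $\Ker A^2 \neq \Ker A$ and hence contradict the algebraic simplicity of $\l_*$. Your version is slightly more explicit in spelling out the contradiction and in checking the compactness and separability hypotheses of Theorem~\ref{continuation}, which the paper leaves implicit.
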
 

\begin{proof}
 The operator $A = L-\l_*I$ is Fredholm of index zero and the condition $Cx_* = x_* \not= 0$ is trivially satisfied.
Therefore, according to Proposition \ref{unsolvability}, $(x_*,0,\l_*)$ is simple if (and only if) $x_* \not\in \Im A$; and this is true, otherwise $\Ker A\sp2$ would differ from $\Ker A$ and the eigenvalue $\l_*$ would not be simple.
The final assertion now follows from Theorem \ref{continuation}.
\end{proof}

\section{Examples}
\label{Examples}

In this last section we will see, in some examples, how Theorem \ref{continuation} applies.
In particular we will show that, in our main result, the hypothesis that the ``starting'' trivial solution is simple cannot be removed.
In each example the real Hilbert spaces $G$ and $H$, as well as the operators $L$, $N$ and $C$, will be explicitly introduced.
The norm in any Hilbert space will be the standard one associated with the inner product.
By $S$, $\Sigma$ and $\mathcal E$ we shall always mean, respectively, the unit sphere of $G$, the set of solutions of the given problem, and the set of the corresponding eigenpairs (which, we recall, is the projection of $\Sigma$ into $\R\sp2$).
As in the previous section, $\Sigma_0$ is the subset of $\Sigma$ of the trivial solutions.

\medskip
We begin with an elementary example regarding a perturbed two-dimensional classical eigenvalue problem, whose set $\Sigma$ is a (topological) circle and connects all the trivial solutions which, in this case, are four (two for each eigenvalue of the unperturbed problem) and all simple.
As one can check, the projection of $\Sigma$ onto $\mathcal E$ is a double covering map.

\begin{example} 
\label{example1}
Let $G=H=\R\sp2$ and consider the problem
\begin{equation}
\label{problem in E1}
\left\{
\begin{array}{ccc}
x_1 + \e x_2 \eql \l x_1,\\
-x_2 - \e x_1 \eql \l x_2,\\[0pt]
x_1\sp2+x_2\sp2 \eql 1,
\end{array}\right.
\end{equation}
in which the operators $L$ and $N$ are defined by sending $x = (x_1,x_2) \in G$ into $(x_1,-x_2)$ and $(x_2,-x_1)$, respectively, and $C$ is the identity.

The operator $L$ has two simple eigenvalues: $\l_* = 1$ and $\l\sp* = -1$.
Consequently, one gets four trivial solutions of the above problem (all of them simple):
\[
\big(\pm(1,0),0,1\big) \quad \text{and} \quad \big(\pm(0,1),0,-1\big).
\]
Notice that the set $\mathcal E$ of the eigenpairs of \eqref{problem in E1} is the unit circle $\e\sp2+\l\sp2 = 1$.
Therefore, $\Sigma$ is necessarily bounded.

The eigenpairs $(\e,\l)$ can be represented, parametrically, as $(-\sin \t, \cos \t)$, with $\t \in [0,2\pi]$ and, as one can check, given $\t$, the kernel of the linear operator
\[
L - (\sin{\t})N -(\cos{\t})C
\]
is spanned by the unit vector
\[
x(\t) =(x_1(\t),x_2(\t)) = (\cos(\t/2),\sin(\t/2)).
\]
Thus, $\Sigma$ can be parametrized as follows:
\[
\t \in [0,4\pi] \mapsto (x(\t),-\sin\t,\cos\t),
\]
and this shows that $\Sigma$ is a topological circle.

As one can easily check, $\Sigma$ encounters all the four trivial solutions of problem \eqref{problem in E1}. Moreover, the set $\Sigma \setminus \Sigma_0$ of the nontrivial solutions has four connected components, each of them diffeomorphic to an open real interval and satisfying the assertion of Conjecture \ref{conjecture-intro} (and, consequently, the thesis of Theorem \ref{continuation}).

Incidentally, we observe that the projection of $\Sigma$ onto the circle $\mathcal E$ is a double covering map.
\end{example}

\medskip
The following illustrating example regards a problem in which the unperturbed equation has a unique eigenvalue.
Since, as we shall see, the assumptions of Theorem \ref{continuation} are satisfied, according to Theorem \ref{continuation of eigenpairs intro}, the set $\mathcal E$ of the eigenpairs is unbounded and, consequently, so is the set $\Sigma$ of the solution triples.

\begin{example} 
\label{example2}
Let $H\sp1([0,2\pi],\R)$ denote the space of all the absolutely continuous functions $x\colon [0,2\pi] \to \R$ whose derivative is in $L\sp2([0,2\pi],\R)$. This is a separable real Hilbert space with inner product
\[
\langle x,y \rangle_1 = \frac{1}{2\pi}\int_0\sp{2\pi}\pb\big(x(s)y(s) + x'(s)y'(s)\big)\, ds.
\]
Our source space $G$ is the kernel of the continuous functional
\[
x \in H\sp1([0,2\pi],\R) \mapsto x(2\pi)-x(0) = \int_0\sp{2\pi}\pb x'(s)\,ds.
\]
So that $G$ is a closed, $1$-codimensional subspace of $H\sp1([0,2\pi],\R)$.
The target space $H$ is $L\sp2([0,2\pi],\R)$ with inner product
\[
\langle x,y \rangle = \frac{1}{2\pi}\int_0\sp{2\pi} \pb x(s)y(s)\, ds.
\]
Consider the problem
\begin{equation}
\label{problem in E2}
\begin{cases}
x'(t) +\e \sin t = \l x(t),\\
x(0)=x(2\pi),\\
\langle x,x \rangle_1 = 1.
\end{cases}
\end{equation}
In abstract form, this can be written as
\begin{equation*}
\label{abstract problem in E2}
\begin{cases}
Lx + \e N(x) = \l Cx,\\
x \in S,
\end{cases}
\end{equation*}
where
\begin{itemize}
\item
$L\colon G \to H$ is the derivative $x \mapsto x'$;
\item
$N\colon G \to H$ is the (constant) map defined by $N(x)=\sin(\cdot)$;
\item
$C$ is the (compact) inclusion of $G$ into $H$.
\end{itemize}

The unperturbed equation $Lx =\l Cx$ has a unique eigenvalue, $\l_* = 0$, whose corresponding eigenspace, the kernel of $L$, is the set of constant functions. Therefore, our problem has only two trivial solutions: $(\pm x_*,0,0)$, where $x_* \in S$ is the constant function $t \mapsto 1$.

The operator $A = L-\l_*C = L$ is Fredholm of index zero, since it is the composition of the inclusion $G \hookrightarrow H\sp1([0,2\pi],\R)$, which is Fredholm of index $-1$, with the differential operator $x \in H\sp1([0,2\pi],\R) \mapsto x' \in H$, which is Fredholm of index $1$ (being surjective with $1$-dimensional kernel).
Therefore, $A$ satisfies the conditions (0) and (1) of Proposition \ref{unsolvability}.
Hence, the trivial solution $(x_*,0,0)$ is simple provided that the problem
\begin{equation*}
\begin{cases}
x'(t) = 1,\\
x(0)=x(2\pi)
\end{cases}
\end{equation*}
has no solutions, and this is clearly true.
Thus, Theorem \ref{continuation} applies.
Consequently, according to Remark \ref{the component}, the connected component $\Gamma$ of $\Sigma$ containing $(x_*,0,0)$ is either unbounded or meets $(-x_*,0,0)$.
As we shall see, $\Gamma$ has both the properties.

Standard computations show that, given any $(\e,\l)$, the problem
\begin{equation}
\label{problem 2 in E2}
\begin{cases}
x'(t) +\e \sin t = \l x(t),\\
x(0)=x(2\pi)
\end{cases}
\end{equation}
has a solution
\[
x(t) = \frac{\e}{1+\l\sp2}\big(\l \sin t + \cos t\big),
\]
which is unique if and only if $\l \not= 0$, and its norm is
$
|\e|/\sqrt{1+\l\sp2}.
$

Regarding the case $\l=0$, given any $\e$, problem \eqref{problem 2 in E2} has infinitely many solutions: $x(t) = c + \e \cos t$ ($c \in \R$), with norm $\sqrt{c\sp2+\e\sp2}$.

Therefore, the set $\mathcal E$ of the eigenpairs is the union of three connected sets.
One is the \emph{segment} $[-1, 1] \times \{0\}$, corresponding to the case $\l=0$.
The other two are the \emph{left} and \emph{right branches} of the hyperbola
\[
\e\sp2 -\l\sp2 = 1.
\]
Thus, $\mathcal E$ is unbounded and, consequently, so is the set $\Sigma$ of all the solution triples, $\mathcal E$ being its projection into $\R\sp2$.

Apart of being unbounded, the set $\mathcal E$ is connected, since both the branches of the hyperbola have a point in common with the segment $[-1, 1] \times \{0\}$. These points are $(-1,0)$ for the left branch and $(1,0)$ for the right one.

Let us show that $\Sigma$ is as well connected and, consequently, coincides with the component $\Gamma$ containing the trivial solution $(x_*,0,0)$.

Obviously, $\Sigma$ is the union of three sets, each of them ``over'' one of the following three subsets of $\mathcal E$: the segment $[-1, 1] \times \{0\}$, and the left and right branches of the hyperbola $\e\sp2 -\l\sp2 = 1$.

The bounded set over $[-1, 1] \times \{0\}$, say $\Gamma_s$, regards the solutions $x(t) = c + \e \cos t$, corresponding to the case $\l=0$ and having norm $\sqrt{c\sp2+\e\sp2} = 1$.
Putting $\e = \sin\t$ and $c = \cos\t$, with $\t \in [0,2\pi]$, we get the following parametrization of $\Gamma_s$:
\[
\t \in [0,2\pi] \mapsto (\cos\t+\sin\t\cos(\cdot), \sin\t, 0),
\]
which shows that this set is a topological circle, which meets the two trivial solutions $(\pm x_*,0,0)$, both simple and corresponding to the eigenvalue $\l_*=0$ of $L$.

Notice that, if the bounded and connected set $\Gamma\!_s$ were a component of $\Sigma$, then $\Gamma$ would coincide with $\Gamma\!_s$, and this fact, although compatible with Theorem \ref{continuation} (see Remark \ref{the component}), would contradict Conjecture \ref{conjecture-intro}.
We will show that this is not the case: as we shall see, $\Gamma$ is unbounded, in accord with our conjecture.

The unbounded sets over the branches of the hyperbola, call them $\Gamma\!_l$ and $\Gamma\!_r$, can be parametrized as follows:
\[
s \in \R \mapsto \Big(\mp\frac{1}{\sqrt{1+s\sp2}}\big(s\sin(\cdot) + \cos(\cdot)\big),\mp \sqrt{1+s\sp2},s \Big);
\]
showing that they are both connected (more precisely, diffeomorphic to $\R$).

As one can check, $\Gamma_s$ has the point $(-1,-1,0)$ in common with $\Gamma\!_l$ and the point $(1,1,0)$ in common with $\Gamma_r$. Thus, the set $\Sigma = \Gamma\!_s \cup \Gamma\!_l \cup \Gamma\!_r$ turns out to be connected.
Therefore, the component $\Gamma$ containing the simple solution $(x_*,0,0)$ coincides with the unbounded set $\Sigma$, in accord with Remark \ref{the component}.

Finally, we observe that, in accord with Theorem \ref{continuation}, the set $\Sigma \setminus \Sigma_0$ is the union of two connected components, both unbounded, whose closure of each of them contains the two simple solutions.
\end{example} 

\medskip
The following is an example of a linear system of two coupled differential equations with periodic boundary conditions whose set $\Sigma$ has a bounded component $\Gamma$, diffeomorphic to a circle and containing four trivial solutions, all of them simple.

In addition to $\Gamma$, the other components of $\Sigma$ are infinitely many geometric circles contained in $\Sigma_0$, each of them corresponding to an isolated trivial eigenpair.
This shows that, in Theorem \ref{continuation}, the hypothesis that the ``starting'' solution $(x_*,0,\l_*)$ is simple cannot be removed.

\begin{example} 
\label{coupled equations}
Consider the following system of coupled differential equations with $2\pi$-periodic boundary conditions:
\begin{equation}
\label{system 3 in E3}
\begin{cases}
x'_1(t) + x_1(t) - \e x_1(t) = \l x_2(t),\\
x'_2(t) - x_2(t) - \e x_2(t) = -\l x_1(t),\\
x_1(0)=x_1(2\pi), \; x_2(0)= x_2(2\pi).
\end{cases}
\end{equation}
Let $H\sp1([0,2\pi],\R\sp2)$ denote the real Hilbert space of the absolutely continuous functions
$
x=(x_1,x_2)\colon [0,2\pi] \to \R\sp2
$
whose derivative is in $L\sp2([0,2\pi],\R\sp2)$.

In this example, the source space $G$ is the closed subspace of $H\sp1([0,2\pi],\R\sp2)$ of the functions satisfying the periodic condition $x(0)=x(2\pi)$.
The target space $H$ is $L\sp2([0,2\pi],\R\sp2)$.

Denoting by $a \cdot b$ the standard dot product of two vectors $a=(a_1,a_2)$ and $b=(b_1,b_2)$ of $\R\sp2$, the inner product of $x$ and $y$ in $H$ is given by
\[
\langle x,y \rangle = \frac{1}{2\pi}\int_0\sp{2\pi}x(t)\cdot y(t)\, dt,
\]
while the inner product of $x$ and $y$ in $G$ is
\[
\langle x,y \rangle_1 = \frac{1}{2\pi}\int_0\sp{2\pi}\big(x(t)\cdot y(t) + x'(t)\cdot y'(t)\big)\, dt.
\]
Since $G$ has codimension $2$ in $H\sp1([0,2\pi],\R\sp2)$, the operator $L \colon G \to H$, given by $(x_1,x_2) \mapsto (x_1'+x_1,x_2'-x_2)$, is Fredholm of index zero. In fact, it is the restriction to $G$ of a surjective operator defined on $H\sp1([0,2\pi],\R\sp2)$ whose kernel is $2$-dimensional.

The operators $N$ and $C$ are defined as $(x_1,x_2) \mapsto (-x_1,-x_2)$ and $(x_1,x_2) \mapsto (x_2,-x_1)$, respectively. They are compact, due to the compact inclusion
\[
H\sp1([0,2\pi],\R\sp2) \hookrightarrow L\sp2([0,2\pi],\R\sp2).
\]
As in the previous examples we seek for solutions of \eqref{system 3 in E3} in the sphere $S$ of $G$. That is, we consider the problem
\begin{equation}
\label{abstract problem in E3}
\begin{cases}
Lx + \e N(x) = \l Cx,\\
x \in S.
\end{cases}
\end{equation}
The system of two coupled differential equations
\begin{equation}
\label{system in E3}
\begin{cases}
x'_1(t) + x_1(t) - \e x_1(t) = \l x_2(t),\\
x'_2(t) - x_2(t) - \e x_2(t) = -\l x_1(t)
\end{cases}
\end{equation}
\smallskip
can be represented in a matrix form as
\smallskip
\[
\left(
\begin{array}{c}
x_1'(t) \\[2ex]
x_2'(t)
\end{array}
\right)
=
\left(
\begin{array}{cc}
\e-1 & \l\\[2ex]
-\l & \e+1
\end{array}
\right)
\left(
\begin{array}{c}
x_1(t) \\[2ex]
x_2(t)
\end{array}
\right),
\]
where, given $\e$ and $\l$, the eigenvalues of the matrix
\[
M(\e,\l)
=
\left(
\begin{array}{cc}
\e-1 & \l\\[2ex]
-\l & \e+1
\end{array}
\right)
\]
are $\e \pm \sqrt{1-\l\sp2}$.
Therefore, if $|\l| > 1$, \eqref{system in E3} admits non-zero $2\pi$-periodic solutions if and only if $\e=0$ and $\sqrt{\l\sp2-1} \in \N$; these solutions are oscillating and, with the addition of the trivial one, they form a two-dimensional subspace of $H\sp1([0,2\pi],\R\sp2)$.
While, if $|\l| \leq 1$, \eqref{system in E3} has non-zero $2\pi$-periodic solutions if and only if $\e\sp2+\l\sp2=1$; these solutions are constant and, with the trivial one, constitute a one-dimensional space.

Clearly, the set $\mathcal E$ of the eigenpairs of \eqref{abstract problem in E3} is
\[
\big\{(\e,\l) \in \R\sp2: \e\sp2+\l\sp2=1\big\} \cup
\big\{(0,\l) \in \R\sp2: \l = \pm \sqrt{1+n\sp2},\; n \in \N \big\}.
\]

As in the Example \ref{example1}, the eigenpairs of the circle $\{(\e,\l) \in \R\sp2: \e\sp2+\l\sp2=1\}$ can be represented parametrically. In this case we set $(\e,\l)=(\cos \a, \sin \a)$, with $\a \in [0,2\pi]$. Thus, as one can show, given any $\a$, the kernel of the linear operator
\[
L + (\cos{\a})N -(\sin {\a})C
\]
is the straight line $\R x\sp\a$, where $x\sp\a \in G$ is the constant function
\[
x\sp\a \colon [0,2\pi] \to \R\sp2, \quad t \mapsto x\sp\a(t) = (x_1\sp\a(t),x_2\sp\a(t))=(\cos(\a/2),\sin(\a/2)),
\]
whose norm is one, $(x\sp\a)'(t)$ being identically zero.
Consequently, in the metric space $\Sigma$, the connected component $\Gamma$ containing the trivial solution
\[
p_* = (x_*,0,\l_*) =(x\sp{\tiny\pi/2},0,1) \in S \times \R \times \R
\]
of \eqref{abstract problem in E3} is diffeomorphic to a circle, as can be seen by means of the parametrization
\[
\a \in [0,4\pi] \mapsto (x\sp\a,\cos \a,\sin \a) \in S \times \R \times \R,
\]
which encounters four points of the set $\Sigma_0$ of the trivial solutions: two of them corresponding to the eigenvalue $\l_*=1$ of the unperturbed operator $L-\l C$ and the others corresponding to $\l\sp*=-1$.

We observe that the projection of $\Gamma$ onto the circle $\{(\e,\l) \in \R\sp2: \e\sp2+\l\sp2=1\}$ is a double covering map.

Let us check whether or not the trivial solution $p_* = (x_*,0,\l_*)$ is simple.
We have already shown that $L$ is Fredholm of index zero. Hence, so is the operator $A = L -\l_* C$, as a compact linear perturbation of $L$.
Moreover, $\Ker A = \R x_* = \R x\sp{\tiny\pi/2}$.
Therefore, $A$ satisfies the conditions (0) and (1) of Proposition \ref{unsolvability} and, consequently, $p_*$ is simple if and only if the equation $A x = C x_*$ is unsolvable.

Since $C x_* = Cx\sp{\tiny\pi/2}$ is the constant function $t \mapsto (\sqrt{2}/2,-\sqrt{2}/2)$, the trivial solution $p_*$ is simple provided that the problem
\begin{equation}
\label{non lo so}
\begin{cases}
x'_1(t) + x_1(t) - x_2(t) = \sqrt{2}/2,\\
x'_2(t) + x_1(t) - x_2(t) = -\sqrt{2}/2,\\
x_1(0)=x_1(2\pi), \; x_2(0)= x_2(2\pi).
\end{cases}
\end{equation}
is unsolvable; and this is clearly true since any solution $t \mapsto (x_1(t),x_2(t))$ of the first two equations cannot be periodic, the function $t \mapsto x_1(t) - x_2(t)$ being strictly increasing.
Analogously, one can check that all the other three trivial solutions of $\Gamma$ are simple.
Moreover, $\Gamma \setminus \Sigma_0$ is the disjoint union of four arcs, each of them satisfying the assertion of Theorem \ref{continuation}.

Notice that the solutions which are not in $\Gamma$ are all trivial and correspond to the eigenpairs $(0,\pm \sqrt{1+n\sp2})$, with $n \in \N$. For example, the solutions of the type $(x,0,\sqrt2)$ are the elements of the geometric circle
\[
\big(\Ker(L-\sqrt2 C) \cap S\big) \times \{(0,\sqrt2)\},
\]
which is a connected component of $\Sigma$ contained in $\Sigma_0$.
Consequently, according to Remark \ref{the component}, none of the solutions of this circle satisfies the assertion of Theorem \ref{continuation}. The fact that they are trivial but not simple shows that, in Theorem \ref{continuation}, the assumption that the solution $(x_*,0,\l_*)$ is simple cannot be removed.
\end{example} 

\medskip
We close with an elementary example showing that, in Conjecture \ref{conjecture-intro}, the assumption that $(x_*,0,\l_*)$ is simple cannot be removed.

\begin{example} 
\label{example4}
Let $G=H=\R\sp2$ and consider the linear problem
\begin{equation}
\label{problem in E4}
\left\{
\begin{array}{rcc}
-\e x_2 \eql \l x_1,\\
-2x_1 + \e x_1 \eql \l x_2,\\[0pt]
x_1\sp2+x_2\sp2 \eql 1.
\end{array}\right.
\end{equation}
The operators $L$ and $N$ are $(x_1,x_2) \mapsto (0,-2x_1)$ and $(x_1,x_2) \mapsto (-x_2,x_1)$, respectively, and $C=I$ is the identity.
The operator $L$ has a unique eigenvalue, $\l_* = 0$, whose multiplicity is two, and the kernel of $L - \l_* I = L$ is spanned by the unit eigenvector $x_* = (0,1)$.

The set $\mathcal E$ of the eigenpairs of \eqref{problem in E4} is the circle $\e(\e-2)+\l^2 = 0$.
Therefore, $\Sigma$ is necessarily bounded, and so is its connected component containing the trivial solution $p_* = (x_*,0,\l_*)$.
This component cannot meet any solution $p^* = (x^*,0,\l^*)$ with $\l^* \not= \l_*$, since our unperturbed problem has only one eigenvalue.

Thus, at least one of the following two possibilities holds true: the Conjecture \ref{conjecture-intro} is false or the trivial solution $p_*$ is not simple.
e are not sure about the conjecture, but of course $p_*$ is not simple, as one can easily check, for example, by means of Proposition \ref{unsolvability}.
\end{example} 



\end{document}